\newcommand{\Z}{\mathbb{Z}}
\newcommand{\Q}{\mathbb{Q}}
\newcommand{\C}{\mathbb{C}}
\renewcommand{\P}{\mathbb{P}}
\newcommand{\GL}{\mathrm{{GL}}}
\newcommand{\Psaut}{\mathrm{Psaut}}
\newcommand{\Aut}{\mathrm{Aut}}
\newcommand{\eps}{\varepsilon}
\newcommand{\id}{\mathrm{id}}
\newcommand{\co}{\mathrm{co}}
\theoremstyle{plain}
\newtheorem{theorem}{Theorem}[section]
\newtheorem{lemma}[theorem]{Lemma}
\newtheorem{proposition}[theorem]{Proposition}
\newtheorem{corollary}[theorem]{Corollary}
\theoremstyle{definition}
\newtheorem{definition}[theorem]{Definition}
\newtheorem{remark}[theorem]{Remark}
\title[Pseudoautomorphisms of $\P^3$ blown-up at very general points]{The pseudoautomorphism group of $\P^3$ blown-up at $8$ very general points}
\author{C\'ecile Gachet}
\begin{document}

\begin{abstract}
We prove that the pseudoautomorphism group of a blow-up of $\P^3$ at $8$ very general points is trivial. We also establish the injectivity of the Coble representation associated to blow-ups of $\P^3$ at $r\ge 8$ general points, answering a question of Dolgachev--Ortland.
\end{abstract}

\maketitle

\section{Introduction}

It is commonly expected that for $r$ large enough, blow-ups of the projective space $\P^n$ at $r$ very general points have no symmetries. Coble \cite{Cob16} famously showed that for $\P^2$, no automorphism arises as soon as $r\ge 9$. In this short note, we focus on $n=3$ and show the following result.

\begin{theorem}\label{thm-main}
Let $X$ denote the blow-up of $\P^3$ at $r=8$ very general points. The pseudoautomorphism group of $X$ is trivial.
\end{theorem}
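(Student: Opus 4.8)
The plan is to study the action of $\Psaut(X)$ on the Picard lattice and to show that this action factors through the trivial group, whence $\Psaut(X)$ is itself trivial once the action is known to be faithful. Write $\pic(X) = \Z H \oplus \bigoplus_{i=1}^{8} \Z E_i$, where $H$ is the pullback of a hyperplane and the $E_i$ are the exceptional divisors, and equip it with the Dolgachev--Mukai bilinear form, for which $K_X = -4H + 2\sum_{i=1}^{8} E_i$ and the orthogonal complement $K_X^{\perp}$ is the affine root lattice of type $\tilde{E}_7 = T_{2,4,4}$. Any $f \in \Psaut(X)$ is an isomorphism in codimension $1$, so $f^{*}$ preserves this form, fixes $K_X$, and permutes the faces of $\pseff(X)$; standard lattice theory then places $f^{*}$ in the Weyl group $W = W(\tilde{E}_7)$, up to the finite group of diagram symmetries, which is treated in the same way. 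The subtlety particular to $r = 8$ is that $T_{2,4,4}$ is \emph{affine}, so $W$ is infinite: it is an extension of the finite group $W(E_7)$ by a lattice of translations, and it is precisely these translations that one must exclude.

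First I would show that the representation $\Psaut(X) \to W$, $f \mapsto f^{*}$, is injective, i.e.\ that its kernel is trivial. If $f^{*}$ is the identity on $\pic(X)$, then $f^{*}H = H$, so the birational map $\bar{f}$ induced on $\P^3$ is defined by a subsystem of $|\O_{\P^3}(1)|$ and hence lies in $\PGL_4$; and $f^{*}E_i = E_i$ forces $\bar{f}$ to fix each of the eight points. Since eight very general points of $\P^3$ have trivial stabilizer in $\PGL_4$, we get $\bar{f} = \id$ and therefore $f = \id$. Thus it suffices to prove that the image of $\Psaut(X)$ in $W$ is trivial.

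Next I would reformulate realizability in terms of the Coble representation. An element $w \in W$ is induced by a pseudoautomorphism of the blow-up of $\P^3$ at a configuration $P = (p_1,\dots,p_8)$ if and only if $P$ is fixed by the birational Cremona action of $w$ on the moduli space $\mathcal{M}$ of eight points of $\P^3$ modulo $\PGL_4$, a variety of dimension $8\cdot 3 - 15 = 9$; here $w$ is built from the standard Cremona transformation attached to the root $H - E_1 - E_2 - E_3 - E_4$ together with permutations of the points. Granting that the Coble representation $W \to \mathrm{Bir}(\mathcal{M})$ is faithful, each nontrivial $w$ acts nontrivially on $\mathcal{M}$, so its fixed locus $\fix(w) \subsetneq \mathcal{M}$ is a proper closed subvariety. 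As $W$ is countable, $\bigcup_{w \neq 1} \fix(w)$ is a countable union of proper subvarieties, so a very general $P$ lies in none of them. For such $P$ no nontrivial element of $W$ is realized, the image of $\Psaut(X)$ in $W$ is trivial, and by the previous paragraph $\Psaut(X) = \{\id\}$.

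The main obstacle is the faithfulness of the Coble representation for the \emph{infinite} affine group $W(\tilde{E}_7)$, which is precisely the injectivity statement advertised for $r \ge 8$. One cannot check nontriviality element by element; instead I expect to exploit the affine structure, reducing to the finite part $W(E_7)$, whose faithful action is detected on cross-ratio-type invariants of the configuration, and to the translation lattice, for which one must show that a nontrivial translation moves a general configuration. The natural device is the anticanonical geometry: $-K_X = 4H - 2\sum E_i$ endows $X$ with a fibration-like structure along which the translations act with infinite order, in analogy with the $\tilde{E}_8$ Halphen picture for nine points of $\P^2$. Tracking the multidegree of the transformed configuration under such a translation and verifying that it is genuinely unbounded should show that no nontrivial translation can fix a general $P$; making this quantitative and uniform over all of $W$ is the technical heart of the argument.
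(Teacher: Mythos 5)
Your overall scheme---show the representation $\rho:\Psaut(X)\to\GL(N^1(X)/{\rm tors})$ is faithful, then show its image is trivial---is the same frame the paper uses, and your kernel argument is essentially the paper's Lemma \ref{lem-psaut*}. But there are two genuine gaps in the second half. The first is your assertion that any $f\in\Psaut(X)$ preserves the Dolgachev--Mukai form simply because it ``is an isomorphism in codimension $1$.'' For threefolds this is not automatic: the form $q_{\rm p}(D)=D^2\cdot\left(-\frac{1}{2}K_X\right)$ involves triple intersection numbers of divisors, which small birational modifications do change in general (a flip, for instance, is small and not crepant). Establishing this preservation is exactly the content of the paper's Proposition \ref{prop-psautqp}, and it is where the hypothesis $r\le 8$ enters: one uses the nef anticanonical divisor to decompose $f$ into a finite sequence of flops \cite[Corollary 3.4]{SX23}, and then shows (Lemma \ref{lem-flops}) that each such sequence restricts to an isomorphism on a very general member of $\left\vert-\frac{1}{2}K_X\right\vert$. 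Without this step, your claim that ``standard lattice theory places $f^*$ in $W(\tilde{E}_7)$'' has no basis---and indeed the absence of such a statement for $r\ge 9$ is precisely why the problem remains open there.

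The second gap is structural: granting $f^*\in W_{3,8}$, you reduce triviality of the image to the injectivity of the Coble representation, and then concede that this injectivity is ``the technical heart'' and only sketch it. This inverts, and in effect short-circuits, the paper's logic: the paper proves Theorem \ref{thm-main} first by a geometric argument (the isomorphism open sets of $f$ contain the quartic curve $C_{\rm p}$, the restriction ${\rm Pic}(X)\to{\rm Pic}(C_{\rm p})$ is injective, the structure of $\Aut(C_{\rm p})$ pins down $f^*E_i=\sigma E_i+L$, and the lattice computation of Lemma \ref{lem-Lzerosone} forces $\sigma=1$, $L=0$), and then \emph{deduces} the $r=8$ case of Coble injectivity from Theorem \ref{thm-main} together with \cite[Theorem 5, Page 99]{DO88}. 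So taking Coble injectivity as input for Theorem \ref{thm-main} is circular unless you can prove it independently, and your sketch does not do so: unboundedness of multidegrees under translations is a statement about the action of $W_{3,8}$ on the lattice ${\rm H}_8$, which is already known to be faithful (Proposition \ref{prop-whyperbolic}), whereas what is needed is faithfulness of the action on the \emph{moduli space} of configurations. A nontrivial Weyl element can perfectly well act trivially on moduli---this is what the Geiser and Bertini involutions do for small numbers of points in $\P^2$, and it is why Dolgachev--Ortland's question is not formal. Ruling this out requires an actual mechanism (in the paper, the restriction to $C_{\rm p}$ and Lemmas \ref{lem-gactingrlarge}--\ref{lem-Lzerosone}); nothing in the cross-ratio/multidegree sketch supplies one.
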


Proving such a result in dimension greater than $2$ is known to be challenging; see e.g. \cite[Section 1.2]{SX23}. This is due to the lack of understanding of the subgroup of the Cremona group ${\rm Bir}(\P^3)$ generated by ${\rm Aut}(\P^3)$ and standard Cremona transformations, more specifically to issues pointed out in \cite[Remark 1]{Dol11}. To circumvent these issues for $r=8$, we use the nef anticanonical divisor to decompose pseudo-automorphisms into finite sequences of flops; see Section \ref{sec-3}. 

Triviality of the pseudoautomorphism group of a blow-up of $\P^3$ at $r\ge 9$ very general points remains an open question. 
Meanwhile for $r\le 4$, the blow-ups are toric varieties, and for $r$ ranging from $5$ to $7$, they are Mori Dream Spaces: In all these cases, the pseudoautomorphism groups are essentially known by \cite{DO88}.

Along the way, we establish the injectivity of the Coble representation for $r\ge 8$, hereby answering a question of Dolgachev--Ortland; see \cite[Page 130]{DO88}. 

\begin{theorem}\label{thm-r9}
For $r\ge 8$, the Coble representation $\co_{3,r}$ of the Weyl group $W_{3,r}$ on the moduli space of semistable $r$-tuples of points in $\P^3$ is injective.
\end{theorem}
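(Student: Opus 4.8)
The plan is to deduce the injectivity of the Coble representation $\co_{3,r}$ for all $r\ge 8$ from the case $r=8$, and to settle the case $r=8$ using Theorem \ref{thm-main}. The Weyl group $W_{3,r}$ is generated by the transpositions permuting the $r$ points together with a standard Cremona transformation; for $r\ge 8$ it is an infinite Coxeter group. The Coble representation $\co_{3,r}$ records how $W_{3,r}$ acts on the moduli space of (semistable) $r$-tuples of points in $\P^3$, and injectivity means that no nontrivial word in these generators acts trivially on a general point configuration.

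The key geometric bridge is the standard dictionary between the Weyl group action and birational geometry: for a very general $r$-tuple $p$ of points, an element $w\in W_{3,r}$ fixing the class $[p]$ in the moduli space produces a pseudo-automorphism of the blow-up $X_p$ of $\P^3$ at those points — the composite of the blow-down, the Cremona/permutation transformations encoded by $w$, and the blow-up of the resulting configuration $w\cdot p$, which is projectively equivalent to $p$ precisely because $w$ fixes $[p]$. Concretely, the first step is to set up this correspondence carefully and identify the kernel of $\co_{3,r}$ with the group of $w\in W_{3,r}$ inducing a pseudo-automorphism of $X_p$ that acts trivially on the moduli point, for $p$ very general. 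This is the place where one must be careful that ``very general'' in Theorem \ref{thm-main} is compatible with ``general'' in the moduli-theoretic statement, and that the pseudo-automorphisms produced are not merely birational self-maps but genuine pseudo-isomorphisms.

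For $r=8$, Theorem \ref{thm-main} asserts that $\Psaut(X_p)$ is trivial for very general $p$. I would argue that any $w$ in the kernel of $\co_{3,8}$ yields, via the dictionary above, a pseudo-automorphism of $X_p$; since this pseudo-automorphism must be trivial, the induced action of $w$ on the Picard lattice $\pic(X_p)$ is trivial, and since $W_{3,8}$ acts faithfully on the Picard lattice (its action being the reflection representation of the Coxeter system $E_8$-type root lattice), it follows that $w=\id$. Thus $\co_{3,8}$ is injective. The one subtlety to check is that a $w$ acting trivially on the moduli point really does come from an element acting trivially on $\pic(X_p)$ rather than just from a pseudo-automorphism inducing a possibly nontrivial lattice automorphism that happens to preserve the marked classes; this is where the faithfulness of the reflection representation and the structure of $\Psaut(X_p)$ must be combined.

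To pass from $r=8$ to $r\ge 9$, I would use a \emph{restriction/specialization} argument: a configuration of $r\ge 9$ very general points contains subconfigurations of $8$ very general points, and the Weyl group $W_{3,8}$ embeds in $W_{3,r}$ as the subgroup fixing the extra points. An element $w\in\ker\co_{3,r}$ restricts, after specializing the last $r-8$ points to a very general position, to an element of $\ker\co_{3,8}$ on each such $8$-point subconfiguration, which is trivial by the previous step; combined with the fact that these subgroups $W_{3,8}\subset W_{3,r}$ generate $W_{3,r}$, one concludes $w=\id$. \textbf{The main obstacle} I expect is precisely this inductive step: one must ensure that specializing the surplus points to make an $8$-point subconfiguration very general does not destroy the triviality of the moduli action of $w$, i.e. that the vanishing of the moduli action is preserved under the relevant degeneration and that the collection of $8$-element subsets genuinely generates $W_{3,r}$ as a group. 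Handling the degeneration correctly — keeping track of semistability and of the very-general locus simultaneously for infinitely many subconfigurations — is the technically delicate heart of the argument.
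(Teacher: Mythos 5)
Your $r=8$ argument is essentially the paper's: the dictionary between kernel elements of the Coble representation and pseudoautomorphisms (this is \cite[Theorem 5, Page 99]{DO88}), combined with Theorem \ref{thm-main} and faithfulness of the lattice representation $\pi_r$ of Proposition \ref{prop-whyperbolic}, gives injectivity of $\co_{3,8}$. (Minor slip: $W_{3,8}$ is the Weyl group of $T_{2,4,4}=\tilde{E_7}$, not of an $E_8$-type lattice, but the faithfulness you invoke is correct.)

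The step from $r=8$ to $r\ge 9$, however, has a genuine gap, and it is not merely the ``technically delicate'' degeneration issue you flag. Your plan is: (i) an element $w\in\ker\co_{3,r}$ ``restricts'' to elements of $\ker\co_{3,8}$ on $8$-point subconfigurations, which are trivial; (ii) since the copies of $W_{3,8}$ generate $W_{3,r}$, conclude $w=\id$. Both steps fail. For (i): a general $w\in W_{3,r}$ does not lie in any copy of $W_{3,8}$, and its action does not restrict to $8$-point subsets at all --- a word in $W_{3,r}$ involves Cremona transformations centered at arbitrary $4$-element subsets of the $r$ points, so it mixes every point with every other; there is simply no induced element of $W_{3,8}$ to which the case $r=8$ could be applied. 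For (ii): even if you could show that $\ker\co_{3,r}$ intersects each embedded copy of $W_{3,8}$ trivially, this would not force the kernel to be trivial. The kernel is a normal subgroup, and a nontrivial normal subgroup can meet every member of a generating family of subgroups trivially: already $A_3\triangleleft\mathfrak{S}_3$ is nontrivial, intersects each subgroup $\langle\tau_i\rangle$ generated by a transposition trivially, and those transpositions generate $\mathfrak{S}_3$. So the group theory underlying your induction does not close.

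The paper's route for $r\ge 9$ is different and avoids this entirely: it does \emph{not} reduce to $r=8$ (indeed triviality of $\Psaut$ for $r\ge 9$ very general points is open). Instead it takes ${\rm p}$ very general among $r$-tuples lying on a quartic curve $C_{\rm p}$ --- a locus $V_r$ preserved by the Coble action (Remark \ref{rem-Vr}) --- produces from $w\in\ker\co_{3,r}$ a pseudoautomorphism $g$ of $X_{\rm p}$ with $g^*=\varphi_{\rm p}\circ\pi_r(w)\circ\varphi_{\rm p}^{-1}$, and then shows by induction on the number of occurrences of the Cremona generator $s$ in a word for $w$ that $C_{\rm p}$ lies in the isomorphism open sets of $g$ (using that the lifted standard Cremona transformation is an isomorphism away from the strict transforms of the six lines through its four centers). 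Restriction to the genus-one curve $C_{\rm p}$ (Lemma \ref{lem-gactingrlarge}) together with the lattice argument of Lemma \ref{lem-Lzerosone} then forces $g^*=\id$, and faithfulness of $\pi_r$ gives $w=\id$. If you want to salvage your approach, you would need to replace your step (ii) by an argument of this kind that handles an arbitrary element of the kernel directly, rather than only elements lying in the generating subgroups.
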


Our strategy of proof is inspired by \cite{Hir88}; see Sections \ref{sec-4}, \ref{sec-5}, and \ref{sec-6}.

Let us finally mention a consequence of Theorem \ref{thm-main} and of \cite{SX23}.

\begin{corollary}\label{cor-cc}
Let $X$ be the blow-up of $\P^3$ at eight very general points and consider a Calabi--Yau pair $(X,\Delta)$. Then the pair $(X,\Delta)$ is not klt and it fails the movable cone conjecture.
\end{corollary}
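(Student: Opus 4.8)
The plan is to derive Corollary~\ref{cor-cc} as a formal consequence of Theorem~\ref{thm-main} together with the results of \cite{SX23}. The strategy has two logically separate pieces: first establishing that $(X,\Delta)$ cannot be klt, and second using this to invoke the failure of the movable cone conjecture.

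\begin{proof}
Let $X$ be the blow-up of $\P^3$ at eight very general points, and suppose $(X,\Delta)$ is a Calabi--Yau pair, so that $K_X+\Delta\eqnum 0$ with $\Delta$ effective. First I would argue that $(X,\Delta)$ cannot be klt. The key point is that a klt Calabi--Yau pair on a smooth rationally connected threefold has a pseudoautomorphism group that, up to finite index, acts with a rational polyhedral fundamental domain on its movable cone; this is precisely the content of the movable cone conjecture, which is known to \emph{hold} for klt Calabi--Yau pairs in this setting by the main theorem of \cite{SX23}. On the other hand, the movable cone of $X$ is \emph{not} rational polyhedral: since the eight points are very general, the pseudo-effective and movable cones of $X$ are genuinely ``round'' (this is the $r=8$ analogue of the classical $\P^2$ picture, where the cones acquire a circular boundary once $r\ge 9$), so no finite-index subgroup of $\Psaut(X)$ can tile the movable cone with a rational polyhedral fundamental domain. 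But by Theorem~\ref{thm-main}, $\Psaut(X)$ is trivial, hence certainly cannot produce such a tiling of a non-polyhedral cone. This contradiction shows $(X,\Delta)$ fails the movable cone conjecture.

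To organize this cleanly, I would argue the two assertions in tandem rather than separately. Assume for contradiction that $(X,\Delta)$ is klt. Then \cite{SX23} applies to the klt Calabi--Yau pair $(X,\Delta)$ and guarantees that the movable cone conjecture holds, i.e.\ that $\Psaut(X,\Delta)$ (or a suitable subgroup preserving $\Delta$) acts on $\overline{\mathrm{Mov}}(X)$ with rational polyhedral fundamental domain. Since $\Psaut(X)$ is trivial by Theorem~\ref{thm-main}, this forces $\overline{\mathrm{Mov}}(X)$ itself to be rational polyhedral. I would then derive a contradiction from the very general position of the eight points, which forces the movable cone to have infinitely many extremal rays accumulating along a round boundary --- in particular it is not rational polyhedral. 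This contradiction simultaneously yields that $(X,\Delta)$ is not klt and that it fails the movable cone conjecture.

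The main obstacle I anticipate is pinning down the precise geometry of $\overline{\mathrm{Mov}}(X)$ for eight very general points and showing it is not rational polyhedral. This requires either an explicit description of the effective and movable cones of $X$ --- plausibly via the $E_8$-type Weyl group action on $\pic(X)$ and the nef anticanonical class used in Section~\ref{sec-3} --- or a softer argument showing that rational polyhedrality of $\overline{\mathrm{Mov}}(X)$ together with triviality of $\Psaut(X)$ would contradict some known infinitude of $(-1)$-type or flopping classes. The remaining steps, namely the formal deduction from \cite{SX23} and the logical bookkeeping of the two conclusions, are routine once this cone-theoretic input is in place.
\end{proof}
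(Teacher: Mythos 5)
Your deduction that the pair fails the movable cone conjecture is, in outline, the paper's own argument: by Theorem~\ref{thm-main} the group $\Psaut(X)$ (hence $\Psaut(X,\Delta)$) is trivial, so a rational polyhedral fundamental domain would force the movable effective cone itself to be rational polyhedral, which it is not. However, you treat the non-polyhedrality of the cone as an open step (your ``main obstacle''), to be established by analyzing a ``round'' boundary or a Weyl group action (which here is $T_{2,4,4}=\tilde{E_7}$, not $E_8$). The paper does none of this: it simply quotes \cite[Lemma 7.1]{SX23}, which states exactly that ${\rm Mov}^{\rm e}(X)$ is not rational polyhedral. So this half of your argument is correct in structure but leaves unfilled a gap that is meant to be closed by citation.

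The genuine gap is in your proof that $(X,\Delta)$ is not klt. You derive it from the claim that ``the main theorem of \cite{SX23}'' establishes the movable cone conjecture for klt Calabi--Yau pairs in this setting, and then run a contradiction. No such theorem is available: the cone conjecture for klt Calabi--Yau threefold pairs is an open problem, and what the paper extracts from \cite{SX23} is the non-polyhedrality of the cone and the flop decomposition of pseudoautomorphisms, not a klt cone theorem --- indeed, if such a result existed, the paper would have used it instead of arguing directly. The paper's proof of non-kltness is entirely geometric and independent of the cone conjecture: let $C$ be the base curve of the pencil $\left\vert -\frac{1}{2}K_X\right\vert$, so that $-K_X\cdot C=0$; since $-K_X$ is not semiample by \cite{LO16}, every member $D\in\vert -mK_X\vert$ must meet $C$ (otherwise the base locus of $\vert -mK_X\vert$, which is contained in $C$, would be empty and $-K_X$ would be semiample), hence contains $C$ because $D\cdot C=0$ and $C$ is irreducible. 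Thus $C$ lies in the base locus of $\vert -mK_X\vert$ for every $m$, and since any Calabi--Yau boundary satisfies $m\Delta\in\vert -mK_X\vert$ for divisible $m$, the boundary $\Delta$ is forced to be singular along $C$ in a way incompatible with klt-ness. Your proposal has no substitute for this argument (one alternative with the same essential input would be: klt plus log abundance for threefolds would make $-K_X$ semiample, again contradicting \cite{LO16}), so as written the first assertion of the corollary is unproven.
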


\medskip

\noindent {\bf Acknowledgements.} I was first made aware of the question resolved in Theorem \ref{thm-main} by Isabel Stenger and Zhixin Xie in Saarbr\"ucken in June 2023, and I am grateful for the many discussions we had about their article \cite{SX23}, and their comments on a draft. I also thank Michel Brion for rebutting an incorrect idea I aired when visiting Grenoble in June 2024. I am partially supported by the projects GAG ANR-24-CE40-3526-01 and the DFG funded TRR 195 - Projektnumber@ 286237555.

\section{Notations and preliminaries}\label{sec-2}

Throughout this paper, we work over an uncountable algebraically closed field $k$ of arbitrary characteristics.
For a smooth projective variety $X$, we denote by $N^1(X)$, respectively $N^1(X)_{\Q}$ the space of numerical equivalence classes of $\Z$, respectively $\Q$-divisors on $X$. 
We define $\Psaut(X)$ to be the group of birational automorphisms of $X$ that are isomorphisms in codimension one.
Pulling back $\Q$-Cartier divisor classes by pseudoautomorphisms induces a natural representation
$$\rho:\Psaut(X)\to\GL(N^1(X)/{\rm tors}),$$
whose image is denoted by $\Psaut^{\ast}(X)$.

For an isomorphism in codimension one $g:X\dashrightarrow Y$ between two normal projective threefolds, we define the
{\it isomorphism open sets} of $g$ to be the maximal Zariski open sets $U\subset X$ and $V\subset Y$ such that the complements $X\setminus U$ and $Y\setminus V$ have pure dimension one and $g$ induces an isomorphism between $U$ and $V$.

\medskip

We say that a set of points in $\P^3$ are {\it linearly independent} if no four of them lie on a common plane. For $r\ge 1$ and ${\rm p}$ an $r$-tuple of distinct, linearly independent points in $\P^3$, we denote by $X_{\rm p}$ the blow-up of $\P^3$ at the center ${\rm p}$. We denote by $\eps_{\rm p}:X_{\rm p}\to \P^3$ the blow-up of ${\rm p}$ in $\P^3$, by $H$ the class of a hyperplane in $\P^3$ and by $E_1,\ldots,E_r$ the exceptional divisors above the points of ${\rm p}$.

We define the following lattice: ${\rm H}_r =\bigoplus_{i=0}^r\Z h_i$ endowed with the symmetric bilinear form
$$(h_i,h_j)=\left\lbrace
\begin{array}{ll}
2\delta_{ij} & \mbox{ if }ij=0,\\
-\delta_{ij} & \mbox{ otherwise}.
\end{array}\right.$$
The corresponding quadratic form is hyperbolic.
Following \cite[Bottom of Page 69]{DO88}, we introduce the strict geometric marking 
$$\varphi_{\rm p}:{\rm H}_r\overset{\sim}{\longrightarrow}N^1(X_{\rm p})/{\rm tors}$$
sending $h_0$ to $\eps_{\rm p}^*H$ and $h_i$ to $E_i$. The induced hyperbolic quadratic form on $N^1(X_{\rm p})/{\rm tor}$ is denoted by $q_{\rm p}$.

It will often be the case that all points of ${\rm p}$ belong to the same smooth quartic curve in $\P^3$: We reserve the notation $C_{\rm p}$ for the curve in this case.

\subsection{The Weyl group \texorpdfstring{$W_{3,r}$}{W3,r}}

\begin{definition}\label{def-weyl}
For $r\ge 5$, we denote by $W_{3,r}$ the Weyl group associated to the root system $T_{2,4,r-4}$, whose Dynkin diagram is depicted in Figure \ref{fig-dynkin}. It comes with a preferred set of involutive generators, which we denote by $\tau_1,\ldots,\tau_{r-1},s$: We set the generators $\tau_i$ to correspond to the vertices of the horizontal chain present in the diagram, from left to right; We set $s$ to correspond to the remaining vertex.
\end{definition}

\begin{figure}[H]
  \begin{tikzpicture}[scale=.4]
    \foreach \x in {0,1,2,3,4,6,7}
    \draw[thick,xshift=\x cm] (\x cm,0) circle (3 mm);
    \foreach \y in {0,1,2,3,6}
    \draw[thick,xshift=\y cm] (\y cm,0) ++(.3 cm, 0) -- +(14 mm,0);
    \draw[thick, loosely dotted] (9 cm,0) -- +(2 cm,0);
    \draw[thick] (6 cm,- 2 cm) circle (3 mm);
    \draw[thick] (6 cm, -3mm) -- +(0, -1.4 cm);
    \draw[thick, decorate, decoration = {calligraphic brace}] (5.7cm, 0.8cm) --  +(8.4cm, 0) node[yshift = 0.4cm, xshift=-1.7cm]{$r-4$ vertices};
  \end{tikzpicture}
  
  \caption{The Dynkin diagram of $T_{2,4,r-4}$}\label{fig-dynkin}
\end{figure}
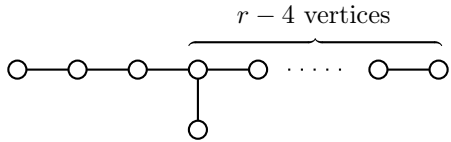

\begin{remark} We mention a few obvious facts.
\begin{enumerate}
\item $\tau_1,\ldots,\tau_{r-1}$ generate a copy of the symmetric group $\mathfrak{S}_r$ in $W_{3,r}$.
\item For small values of $r$, we recover known root systems, namely:
$$T_{2,4,1}= A_5,\quad T_{2,4,2} = D_6,\quad T_{2,4,3} = E_7,\quad T_{2,4,4} = \tilde{E_7}.$$
We recall that $\tilde{E_7}$ is the affine root system based on $E_7$.
\end{enumerate}
\end{remark}

We recall a natural action of $W_{3,r}$ on the hyperbolic lattice ${\rm H}_r$ (see \cite{Dol83,Muk04}, \cite[Page 72, 73]{DO88}, \cite[Section 4.1]{SX23}).

\begin{proposition}\label{prop-whyperbolic}
There is an injective group morphism
$$\pi_r:W_{3,r}\hookrightarrow {\rm Isom}^+({\rm H}_r)$$
sending the generator $\tau_i$ to the hyperbolic reflection relative to $h_i-h_{i+1}$ and $s$ to the hyperbolic reflection relative to $h_0-h_1-h_2-h_3-h_4$.
\end{proposition}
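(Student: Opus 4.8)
The plan is to define $\pi_r$ on the Coxeter generators by the prescribed reflections, to check that the defining relations of $W_{3,r}$ hold in ${\rm Isom}^+({\rm H}_r)$ so that $\pi_r$ is a well-defined homomorphism, and finally to deduce injectivity from Tits' theorem on the faithfulness of the geometric representation of a Coxeter group.

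First I would record the relevant root vectors: set $\alpha_i = h_i - h_{i+1}$ for $1\le i\le r-1$ and $\alpha_0 = h_0-h_1-h_2-h_3-h_4$. Using $(h_0,h_0)=2$ and $(h_i,h_i)=-1$ for $i\ge 1$, a direct computation gives $(\alpha_j,\alpha_j)=-2$ for every $j\in\{0,1,\ldots,r-1\}$. Hence for each such root the reflection $s_\alpha(x)=x-\frac{2(\alpha,x)}{(\alpha,\alpha)}\,\alpha = x+(\alpha,x)\,\alpha$ preserves the lattice ${\rm H}_r$ (since $(\alpha,x)\in\Z$ and $\alpha\in{\rm H}_r$) and is an involutive isometry of the bilinear form. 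As $\alpha$ has negative norm, its orthogonal hyperplane $\alpha^\perp$ contains the positive-norm directions, so $s_\alpha$ fixes each component of the positive cone; thus $s_\alpha\in{\rm Isom}^+({\rm H}_r)$. I then set $\pi_r(\tau_i)=s_{\alpha_i}$ and $\pi_r(s)=s_{\alpha_0}$.

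Second, to see that $\pi_r$ extends to a group homomorphism, I would verify the Coxeter relations encoded by the diagram $T_{2,4,r-4}$; by the universal property of Coxeter groups it suffices to realize, for each pair of generators, the order of the product of the two reflections prescribed by Figure \ref{fig-dynkin}. This reduces to the Gram matrix of the roots, which one computes to be $(\alpha_i,\alpha_{i+1})=1$ for $1\le i\le r-2$, $(\alpha_i,\alpha_j)=0$ for $|i-j|\ge 2$, and $(\alpha_0,\alpha_i)=\delta_{i,4}$ for $1\le i\le r-1$. Two roots with $(\alpha,\beta)=0$ give commuting reflections, so $(s_\alpha s_\beta)^2=1$; two roots with $(\alpha,\beta)=\pm1$ span a plane on which the form is negative definite with Gram matrix $\begin{pmatrix}-2&\pm1\\\pm1&-2\end{pmatrix}$, so the two reflections generate a finite dihedral group in which their product is a rotation of order $3$ (acting trivially on the orthogonal complement), whence $(s_\alpha s_\beta)^3=1$. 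Since these match exactly the edges of the diagram — single edges between consecutive $\tau_i$ and a single edge from $s$ to $\tau_4$ — the map $\pi_r$ is a well-defined homomorphism into ${\rm Isom}^+({\rm H}_r)$.

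Finally, for injectivity I would identify $\pi_r$ with the geometric representation. Set $V=\mathrm{span}_\R(\alpha_0,\ldots,\alpha_{r-1})\subseteq{\rm H}_r\otimes_\Z\R$; the roots are linearly independent (the $\alpha_i$ with $i\ge1$ are the $A_{r-1}$ simple roots in $h_1,\ldots,h_r$, and $\alpha_0$ is the only one involving $h_0$), so $\dim V=r$, and $V$ is $\pi_r(W_{3,r})$-stable. Rescaling to $\tilde e_i=\alpha_i/\sqrt2$ and replacing the form by $B:=-(\cdot,\cdot)$, the Gram matrix above becomes $B(\tilde e_i,\tilde e_i)=1$ and $B(\tilde e_i,\tilde e_j)=-\cos(\pi/m_{ij})$, where $m_{ij}$ is the order dictated by the diagram. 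Under this normalization the operators $\pi_r(\tau_i)|_V$ and $\pi_r(s)|_V$ are precisely the standard generators of the canonical (Tits) geometric representation of $W_{3,r}$ on $V$; by Tits' theorem this representation is faithful, so $\pi_r|_V$, and a fortiori $\pi_r$, is injective. I expect the only genuine content to be this last step — recognizing $\pi_r|_V$ as the Tits representation and invoking its faithfulness — while the homomorphism property is a finite diagram-matching computation and the integrality and cone-preservation follow at once from all roots having norm $-2$; the fussiest (but routine) part is the sign and $\sqrt2$ normalization needed to match Bourbaki's conventions.
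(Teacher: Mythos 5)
Your proposal is correct, but there is nothing in the paper to compare it against line by line: the paper does not prove Proposition \ref{prop-whyperbolic} at all, it recalls it from the literature (\cite{Dol83,Muk04}, \cite[Page 72, 73]{DO88}, \cite[Section 4.1]{SX23}). Your self-contained argument is essentially the standard one underlying those references, and its three steps all check out. (i) Each prescribed root has norm $-2$, so $s_\alpha(x)=x+(\alpha,x)\alpha$ is an integral, involutive isometry; and since $\alpha^\perp$ has signature $(1,r-1)$, the reflection fixes a positive-norm vector and hence preserves each component of the positive cone, which justifies landing in ${\rm Isom}^+({\rm H}_r)$ --- a point worth making explicit, since reflections have determinant $-1$, so ${\rm Isom}^+$ cannot be read as $\det=+1$. (ii) Your Gram matrix is right: $(\alpha_i,\alpha_{i+1})=1$, $(\alpha_0,\alpha_i)=\delta_{i,4}$, all other off-diagonal entries $0$, which matches Figure \ref{fig-dynkin} (the node $s$ is attached to $\tau_4$, the central node of the three arms of lengths $2$, $4$, $r-4$); the rank-two computations (commutation for orthogonal roots, order $3$ for roots pairing to $1$, using nondegeneracy of the form on the plane they span) then verify the Coxeter relations, so $\pi_r$ is a well-defined homomorphism by the universal property of the Coxeter presentation --- here you implicitly use the classical fact that the Weyl group of $T_{2,4,r-4}$ carries exactly that presentation, which is consistent with how the paper's Definition \ref{def-weyl} sets things up. (iii) The $r$ roots are linearly independent, and after the rescaling $\tilde e_j=\alpha_j/\sqrt2$ and sign change $B=-(\cdot,\cdot)$ the restricted action is precisely the Tits geometric representation, whose faithfulness yields injectivity of $\pi_r$. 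In short: a correct and complete proof of a statement the paper leaves to citations.
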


Fixing an $r$-tuple ${\rm p}$ of distinct, linearly independent points in $\P^3$, we thus obtain a faithful representation of $W_{3,r}$ on the N\'eron--Severi space of the variety $X_{\rm p}$, which preserves the quadratic form $q_{\rm p}$:
$$\varphi_{\rm p}\circ\pi_r\circ \varphi_{\rm p}^{-1}:W_{3,r}\hookrightarrow {\rm GL}(N^1(X_{\rm p})/{\rm tors} \, ; q_{{\rm p}}).$$

\subsection{The Coble representation}
The following representation of the Weyl group $W_{3,r}$ was introduced by Kantor, Coble and du Val in \cite{Kan95,Cob16,dV36}, and subsequently studied in \cite{Dol83,Koi88,Hir88, DO88,Muk04}. The next proposition is due to \cite[Section 7, Page 292]{Dol83}; see also \cite[Remark 4.3]{SX23}. 

\begin{proposition}\label{prop-coblerep}
Let $r\ge 4$, and let $U_r$ denote the moduli space of $r$-tuples of distinct, linearly independent points in $\P^3$
There is a representation 
$$\co_{3,r}:W_{3,r}\to{\rm Bir}(U_r)$$
sending $\tau_i$ to the transposition exchanging the $i$-th point with the $(i+1)$-th point of the tuple, for $1\le i\le r-1$, and sending $s$ to the birational map
$$\co_{3,r}(s) : [p_i] \mapsto [p_1,p_2,p_3,p_4,\mathrm{cr}(p_5),\mathrm{cr}(p_6),\ldots,\mathrm{cr}(p_r)],$$ 
where $\mathrm{cr}$ is the standard Cremona transformation of $\P^3$ centered at the four points $p_1,p_2,p_3,p_4$. We call $\co_{3,r}$ the \emph{Coble representation} of $W_{3,r}$.
\end{proposition}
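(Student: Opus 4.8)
The plan is to exploit that $W_{3,r}$ is the Coxeter group attached to the Dynkin diagram $T_{2,4,r-4}$ of Figure \ref{fig-dynkin}: it is generated by the involutions $\tau_1,\dots,\tau_{r-1},s$ subject only to the Coxeter relations read off the diagram, namely $(\tau_i\tau_{i+1})^3=1$ and $(\tau_i\tau_j)^2=1$ for $|i-j|\ge 2$, together with $(s\tau_4)^3=1$ and $(s\tau_i)^2=1$ for $i\ne 4$ (recall that $s$ is joined only to $\tau_4$). To produce the homomorphism $\co_{3,r}$ it therefore suffices to define the images of the generators as elements of ${\rm Bir}(U_r)$, to check that these images are involutions, and to verify the braid and commutation relations above. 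I normalise throughout so that $p_1,p_2,p_3,p_4$ are the coordinate points of $\P^3$ and $\mathrm{cr}$ is the standard coordinate Cremona involution $[x_0:\dots:x_3]\mapsto[x_1x_2x_3:x_0x_2x_3:x_0x_1x_3:x_0x_1x_2]$.

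First I would settle well-definedness. Each $\co_{3,r}(\tau_i)$ is the honest automorphism of $U_r$ permuting two entries of the tuple, hence lies in ${\rm Bir}(U_r)$ and is an involution. For $s$, the coordinate-wise formula defines a rational self-map of $U_r$ wherever $p_5,\dots,p_r$ avoid the indeterminacy locus of $\mathrm{cr}$; a genericity argument then shows that for a general linearly independent tuple the image $(p_1,p_2,p_3,p_4,\mathrm{cr}(p_5),\dots,\mathrm{cr}(p_r))$ is again linearly independent, so $s$ is a dominant rational self-map. Since $\mathrm{cr}$ is an involution of $\P^3$ and $s$ fixes $p_1,\dots,p_4$, applying $s$ twice applies $\mathrm{cr}\circ\mathrm{cr}=\mathrm{id}$ to each of the remaining entries; thus $s$ is a birational involution and in particular lies in ${\rm Bir}(U_r)$.

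The easy relations come next. The $\tau_i$ generate the tautological action of $\mathfrak{S}_r$ permuting the $r$ entries of the tuple, so the type $A_{r-1}$ relations among $\tau_1,\dots,\tau_{r-1}$ hold automatically. For the commutations $(s\tau_i)^2=1$ with $i\ne 4$ I distinguish two cases. When $i\ge 5$, both $p_i$ and $p_{i+1}$ lie among the transformed entries and $\mathrm{cr}$ is applied to each entry separately, so the transposition and $s$ visibly commute. When $i\le 3$, the transposition $\tau_i$ permutes two of the base points $p_1,\dots,p_4$; since the coordinate Cremona involution is symmetric in its four base points, this leaves $\mathrm{cr}$, and hence $s$, unchanged, so that $s\tau_i=\tau_i s$.

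The main obstacle is the braid relation $(s\tau_4)^3=1$, the one genuinely Cremona-theoretic identity. Here $\tau_4$ swaps the base point $p_4$ with the transformed point $p_5$, so each successive application of $s$ is centered at a different $4$-subset of the configuration and the centers migrate across the compositions; this is precisely the phenomenon that cannot be detected on the lattice ${\rm H}_r$ alone, since the induced map ${\rm Bir}(U_r)\to\GL({\rm H}_r)$ coming from $\pi_r$ need not be injective, so the relation must be checked directly in ${\rm Bir}(U_r)$. Since $\tau_4$ moves only the entries in positions $4$ and $5$ while $s$ preserves all positions, the entries in positions $6,\dots,r$ remain transformed spectators throughout the word $s\tau_4 s\tau_4 s\tau_4$, and the sequence of Cremona maps they undergo is dictated solely by the evolution of positions $1,\dots,5$; invoking the $\mathfrak{S}_{r-4}$-symmetry among positions $5,\dots,r$, which commutes with $s\tau_4$, I would reduce the verification to tracking the configuration $p_1,\dots,p_6$. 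A direct computation in affine coordinates then checks that $s\tau_4 s\tau_4 s\tau_4$ returns each point to its original position modulo $\PGL_4$. This is the classical identity underlying the Weyl-group action on point sets studied in \cite{Cob16,DO88,Muk04}, and once it is in hand the Coxeter presentation of $W_{3,r}$ guarantees that $\co_{3,r}$ is a well-defined group homomorphism.
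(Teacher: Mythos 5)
The paper offers no proof of this proposition at all: it is quoted from \cite[Section 7, Page 292]{Dol83} and \cite[Remark 4.3]{SX23}. Your attempt at a self-contained verification is therefore necessarily a different route, and its skeleton is the correct, classical one: $W_{3,r}$ is the Coxeter group of the diagram $T_{2,4,r-4}$, so it suffices to check that the proposed images of $\tau_1,\ldots,\tau_{r-1},s$ are birational involutions satisfying the diagram relations. Your list of relations is right ($s$ is joined only to $\tau_4$), the type $A_{r-1}$ relations and the commutations $(s\tau_i)^2=1$ for $i\neq 4$ are checked correctly, and your reduction of the braid relation to the configuration $p_1,\ldots,p_6$ (one generic spectator point suffices, since all spectators undergo the same maps) is sound. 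One point you gloss over: the ``standard Cremona transformation centered at $p_1,\ldots,p_4$'' is only defined up to the torus fixing those four points, so $\co_{3,r}(s)$ is well defined only on the $\PGL_4$-quotient; your normalization handles this, and normalizing $p_5=[1:1:1:1]$ as well removes the ambiguity entirely. (Also, strictly speaking there is no ``induced map ${\rm Bir}(U_r)\to\GL({\rm H}_r)$''; what you mean --- that validity of the relation on ${\rm H}_r$ does not imply it in ${\rm Bir}(U_r)$ --- is correct.)

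The genuine gap is that the decisive step, the braid relation $(s\tau_4)^3=1$, which you yourself single out as the only genuinely Cremona-theoretic identity, is never proved: you write that ``a direct computation in affine coordinates then checks'' it and appeal to \cite{Cob16,DO88,Muk04}. If that literature may be cited for the braid relation, then the whole proposition may be cited (which is exactly what the paper does), so as a self-contained argument this is incomplete precisely where all the content lives. The computation is, fortunately, short once set up as you describe. Normalize $p_i=e_i$ for $i\le 4$ and $p_5=[1:1:1:1]$, so that $\mathrm{cr}$ fixes $p_5$, and let $c'$ be the Cremona involution based at $e_1,e_2,e_3,[1:1:1:1]$, explicitly
$$c'([x_0:x_1:x_2:x_3])=\Bigl[\tfrac{x_0}{x_0-x_3}:\tfrac{x_1}{x_1-x_3}:\tfrac{x_2}{x_2-x_3}:1\Bigr],$$
which fixes $e_4$. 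Direct substitution gives the linear map
$$c'\circ\mathrm{cr}\circ c' = \phi,\qquad \phi([x_0:x_1:x_2:x_3])=[x_3-x_0:x_3-x_1:x_3-x_2:x_3],$$
an involution fixing $e_1,e_2,e_3$ and exchanging $e_4\leftrightarrow[1:1:1:1]$. This single identity yields the relation: applying the words $s\tau_4s$ and $\tau_4s\tau_4$ to $(e_1,e_2,e_3,e_4;[1:1:1:1],p_6,\ldots)$ produces, respectively, $(e_1,e_2,e_3,[1:1:1:1];e_4,\,c'\mathrm{cr}(p_j)_{j\ge 6})$ and $(e_1,e_2,e_3,e_4;[1:1:1:1],\,c'(p_j)_{j\ge 6})$, and $\phi$ carries the former tuple to the latter, since $\phi\circ c'\circ\mathrm{cr}=(c'\mathrm{cr}c')c'\mathrm{cr}=c'$. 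With this computation inserted, your proof is complete.
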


\begin{remark}\label{rem-Vr}
To prove that the action of $W_{3,r}$ is generically well-defined on a subset $V_r$ of $U_r$ and preserves $V_r$, it suffices to check it for each of the $r+1$ standard generators of $W_{3,r}$. 
As an application, consider the closure of the set 
$$V_r:= \{{\rm p}\in U_r \mid \mbox{there is a pencil of quadrics containing }{\rm p}\}.$$
It is clearly preserved by the action of the symmetric subgroup of $W_{3,r}$. Since the standard Cremona transformation preserves the linear system of quadrics through its four center points, the action by the last generator $s\in W_{3,r}$ also preserves the closure of $V_r$. 
So the action of $W_{3,r}$ by $\co_{3,r}$ is generically well-defined on that particular $V_r$ and preserves it.
\end{remark}

\section{Pseudoautomorphisms preserve the quadratic form \texorpdfstring{$q_{\rm p}$}{qp} when \texorpdfstring{$r\le 8$}{r>=8}}\label{sec-3}

The main result of this section is the following proposition.

\begin{proposition}\label{prop-psautqp}
Let ${\rm p}$ be a very general $r$-tuple of points in $\P^3$ with $r\le 8$. The action by $\Psaut^*(X_{\rm p})$ preserves $q_{\rm p}$.
\end{proposition}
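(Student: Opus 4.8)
The plan is to identify the abstract form $q_{\rm p}$, which a priori is only transported from the lattice ${\rm H}_r$ via the marking $\varphi_{\rm p}$, with a genuine intersection-theoretic quantity on $X_{\rm p}$ that a pseudoautomorphism visibly controls. On $X_{\rm p}$, with respect to the basis $\eps_{\rm p}^*H, E_1,\dots,E_r$ of $N^1(X_{\rm p})/{\rm tors}$, the only nonzero triple intersection numbers among basis elements are $(\eps_{\rm p}^*H)^3=1$ and $E_i^3=1$, all mixed triple products vanishing. Hence, writing $D=b_0\,\eps_{\rm p}^*H+\sum_i b_iE_i$ and using $-K_{X_{\rm p}}=4\eps_{\rm p}^*H-2\sum_iE_i$, one finds $\bigl(-K_{X_{\rm p}}\bigr)\cdot D\cdot D = 4b_0^2-2\sum_i b_i^2$, whereas the marking gives $q_{\rm p}(D)=2b_0^2-\sum_i b_i^2$. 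I would record this as the key identity
$$q_{\rm p}(D)=\tfrac12\,\bigl(-K_{X_{\rm p}}\bigr)\cdot D\cdot D \qquad\text{for all }D\in N^1(X_{\rm p})/{\rm tors},$$
which holds for every $r$ and reduces the proposition to the $\rho(g)$-invariance of the right-hand side for each $g\in\Psaut(X_{\rm p})$.

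Two facts about pseudoautomorphisms feed into this. First, $g$ is an isomorphism in codimension one, hence crepant, so $\rho(g)$ fixes the anticanonical class: $g^*\bigl(-K_{X_{\rm p}}\bigr)=-K_{X_{\rm p}}$. In view of the identity above it therefore suffices to show that $g^*$ preserves the single contracted triple product $\bigl(-K_{X_{\rm p}}\bigr)\cdot D\cdot D$. I stress that one cannot expect $g^*$ to preserve the full cubic form $D\mapsto D^3$: flops, which are the basic examples of pseudoautomorphisms, genuinely alter it, and this is exactly the subtlety that makes the higher-dimensional statement delicate.

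The mechanism that rescues the contracted form is the nefness of $-K_{X_{\rm p}}$, valid precisely for $r\le 8$: for $r=8$ very general points there is a pencil of quadrics through ${\rm p}$, so that $-K_{X_{\rm p}}=2\bigl(2\eps_{\rm p}^*H-\sum_iE_i\bigr)$ is twice a nef class (base point free off the base quartic curve $C_{\rm p}$), while for $r<8$ the anticanonical is even more positive. Using that $-K_{X_{\rm p}}$ is nef and $g$-invariant, I would realize $g$ as a finite composition of flops, each carried out along $\bigl(-K_{X_{\rm p}}\bigr)$-trivial curves; this is the role of the nef anticanonical divisor advertised in the introduction. For a single such flop $\phi\colon X\dashrightarrow X^+$ with flopping curves $C_1,\dots,C_k$, the trilinear intersection form changes only by a correction whose value on a triple $(D_1,D_2,D_3)$ is a sum of terms proportional to $(D_1\cdot C_j)(D_2\cdot C_j)(D_3\cdot C_j)$. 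Placing $-K_{X_{\rm p}}$ in one slot and invoking $\bigl(-K_{X_{\rm p}}\bigr)\cdot C_j=0$ annihilates the correction, so each flop preserves $\tfrac12\bigl(-K\bigr)\cdot(\cdot)^2$ and hence $q_{\rm p}$; composing the sequence, $\rho(g)$ preserves $q_{\rm p}$.

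The main obstacle is not the bookkeeping above but the two geometric inputs on which it rests: establishing that $-K_{X_{\rm p}}$ is nef for a very general $r$-tuple with $r\le 8$, and — more seriously — proving that a pseudoautomorphism of $X_{\rm p}$ factors into flops along $\bigl(-K_{X_{\rm p}}\bigr)$-trivial curves. The latter is where the hypothesis $r\le 8$ is genuinely used, since for $r\ge 9$ the anticanonical ceases to be nef and the decomposition has no reason to exist; it is the step I expect to require the most care, leaning on the minimal model program together with the crepant structure of $g$ (the anticanonical system organizes $X_{\rm p}$ over a base preserved by $g$, and crepant isomorphisms in codimension one of such models are connected by flops).
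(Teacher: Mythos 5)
Your proposal is correct, and it shares the paper's skeleton: the same key identity $q_{\rm p}(D)=\left(-\tfrac{1}{2}K_{X_{\rm p}}\right)\cdot D^2$, and the same crucial input that every $g\in\Psaut(X_{\rm p})$ factors as a finite sequence of flops --- this is precisely \cite[Corollary 3.4]{SX23}, which the paper cites rather than reproves, and it is indeed where $r\le 8$ (nefness of $-K_{X_{\rm p}}$) enters, exactly as you anticipated. Where you genuinely diverge is the final step. The paper proves a geometric lemma (Lemma \ref{lem-flops}): a very general $S\in\left\vert-\tfrac{1}{2}K_{X_{\rm p}}\right\vert$ meets no flopping curve, since any flopping curve $C$ satisfies $S\cdot C=0$, and $C\subset S$ would force $-K_S\cdot C=0$ by adjunction, impossible on a del Pezzo surface ($r\le 7$) or on an anticanonical surface without $(-2)$-curves ($r=8$, via \cite{LO16}); hence $g$ restricts to an isomorphism of $S$ and $q_{\rm p}(D)=(D|_S)^2$ is visibly preserved. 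You instead stay in intersection theory: each flop is crepant and changes the trilinear form only by a correction supported on the flopping curves, which is annihilated once $-K_{X_{\rm p}}$ occupies one slot, because $-K\cdot C_j=0$. Your route is more elementary and more general at this step --- it needs no very-generality of the points beyond the flop decomposition and no input from \cite{LO16} --- whereas the paper's surface lemma does more work but is then reused: it yields Corollary \ref{cor-definedbasecurve} (the isomorphism open sets of $g$ contain $C_{\rm p}$), which is indispensable in Section \ref{sec-4}; your shortcut would not produce that geometric information. One caveat: your stated correction formula (a sum of terms proportional to $(D_1\cdot C_j)(D_2\cdot C_j)(D_3\cdot C_j)$) is the classical one for simple $(-1,-1)$-flops and would need justification in general, but you only need the weaker vanishing statement, which follows from a common resolution $p\colon W\to X$, $q\colon W\to X^+$: crepancy gives $p^*(-K_X)=q^*(-K_{X^+})$, so the difference of the two contracted forms equals
$$p^*(-K_X)\cdot\bigl[(q^*D^+)^2-(p^*D)^2\bigr]
= p^*(-K_X)\cdot\bigl(q^*D^+-p^*D\bigr)\cdot\bigl(q^*D^++p^*D\bigr),$$
and since $q^*D^+-p^*D$ is exceptional with image contained in the flopping locus, the projection formula reduces this to an intersection of $-K_X$ with a $1$-cycle supported on flopping curves, which vanishes. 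With that substitution your argument is complete.
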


Before proving Proposition \ref{prop-psautqp}, we prove a lemma.

\begin{lemma}\label{lem-flops}
Let ${\rm p}$ be a very general $r$-tuple of points in $\P^3$ with $r\le 8$.
For any finite sequence of flops $\alpha: X_{\rm p}\dashrightarrow Y$, the very general member $S$ of the linear system $\left\vert -\frac{1}{2}K_{X_{\rm p}}\right\vert$ is such, that the restriction
$\alpha|_S:S\dashrightarrow \alpha_*(S)$ is an isomorphism.
\end{lemma}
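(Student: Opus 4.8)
The plan is to exploit that members of $\left\vert-\tfrac12 K_{X_{\rm p}}\right\vert$ are half-anticanonical and that flops are crepant, which forces every flopping curve to be numerically orthogonal to $S$. Writing $\eps_{\rm p}:X_{\rm p}\to\P^3$, one has $K_{X_{\rm p}}=-4H+2\sum_i E_i$, so $-\tfrac12 K_{X_{\rm p}}=2H-\sum_i E_i$, and a member $S$ is the strict transform of a quadric surface through the $r$ points. I would first record that this system is nonempty (a quadric through $r\le 8$ points always exists) and identify its base locus: for $r=8$ it is the strict transform $\tilde C_{\rm p}$ of the elliptic quartic $C_{\rm p}=Q_1\cap Q_2$ cut out by the pencil of quadrics through ${\rm p}$, an irreducible curve of arithmetic genus one, while for $r\le 7$ the base locus is at most zero-dimensional. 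The point to extract is that \emph{the base locus of $\left\vert-\tfrac12 K_{X_{\rm p}}\right\vert$ contains no rational curve}.

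Next I would analyze the flopping curves. Decompose $\alpha=\phi_k\circ\cdots\circ\phi_1$ with $\phi_j:X_{j-1}\dashrightarrow X_j$ a single flop, $X_0=X_{\rm p}$, $X_k=Y$, and set $\psi_{j-1}:=\phi_{j-1}\circ\cdots\circ\phi_1$. Each $\phi_j$ is an isomorphism in codimension one and crepant, so $-K_{X_{j-1}}$ corresponds to $-K_{X_{\rm p}}$ under $\psi_{j-1}$ and its flopping locus is a disjoint union of curves $C$ with $K_{X_{j-1}}\cdot C=0$; since $X_{\rm p}$ is smooth and the $X_j$ have terminal singularities, each such $C$ is a \emph{rational} curve. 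Let $\Gamma\subset X_{\rm p}$ be the non-isomorphism locus of $\alpha$, that is, the complement of the isomorphism open set. As $\alpha$ is an isomorphism in codimension one, $\Gamma$ is a curve with finitely many components, each of which is the strict transform of a flopping curve of some $\phi_j$ and hence rational. Moreover, because $\psi_{j-1}$ preserves $-K$ and the intersection pairing, every component $\gamma\subset\Gamma$ satisfies $\left(-\tfrac12 K_{X_{\rm p}}\right)\cdot\gamma=0$, i.e. $S\cdot\gamma=0$.

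It then remains to choose $S$ disjoint from $\Gamma$. Each component $\gamma$ is rational, hence not contained in the base locus by the first step, so some member of the system does not contain $\gamma$, and a very general member $S$ contains none of the finitely many components of $\Gamma$. For such $S$ the scheme $S\cap\gamma$ is finite of length $S\cdot\gamma=0$, so $S\cap\Gamma=\emptyset$; that is, $S$ lies in the isomorphism open set of $\alpha$. Therefore $\alpha$ restricts to an isomorphism $S\overset{\sim}{\to}\alpha_*(S)$, which is the claim.

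The main obstacle is the combination of the first two steps: controlling the base locus of the half-anticanonical system and ruling out that the base curve itself gets flopped. This is where the hypotheses genuinely enter. One needs the points very general so that $C_{\rm p}$ is a smooth irreducible curve of genus one, and one needs the structural fact that flopping contractions of terminal threefolds contract only rational curves, so that $\tilde C_{\rm p}$, although it is $(-K)$-trivial and lies in every $S$, can never be a flopping curve. Once rationality of the flopping curves is in hand, the numerical orthogonality $S\cdot\gamma=0$ does the rest.
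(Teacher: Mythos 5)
Your proof is correct in substance, but it follows a genuinely different route from the paper's. The paper argues by induction on the number of flops: it chooses $S$ very general so that, besides being smooth and surviving the first $n$ flops, it is a del Pezzo surface when $r\le 7$ and contains no $(-2)$-curve when $r=8$ (a very-generality input borrowed from \cite{LO16}); then, for the next flop, any flopping curve $C$ satisfies $S'\cdot C=0$ for the image surface $S'$, and the dangerous case $C\subset S'$ is excluded by adjunction, since $-K_{S'}\cdot C=0$ contradicts ampleness of $-K_{S'}$ (resp.\ produces a $(-2)$-curve). You instead work once and for all on $X_{\rm p}$: you collect the non-isomorphism locus into finitely many curves, each birational via the crepant partial composition to a flopping curve, hence rational and $K$-trivial, and you exclude $\gamma\subset S$ not by surface theory but by the structure of the base locus of $\left\vert-\frac12 K_{X_{\rm p}}\right\vert$: it is the genus-one quartic $C_{\rm p}$ for $r=8$ (finite for $r\le 7$), so it contains no rational curve, and a very general member then misses each $\gamma$ entirely because $S\cdot\gamma=0$ forces empty scheme-theoretic intersection once $\gamma\not\subset S$. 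Your route buys independence from the $(-2)$-curve lemma of \cite{LO16} and from the adjunction step; the paper's induction buys the convenience of never having to control the non-isomorphism locus of a composition, since at each step only a single flop (an isomorphism off finitely many $K$-trivial rational curves) is in play.

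Two steps in your write-up need more care. First, the justification ``$\psi_{j-1}$ preserves $-K$ and the intersection pairing'' is not literally true: strict transform under a flop does \emph{not} preserve divisor--curve intersection numbers in general (that is the whole point of flopping: an ample divisor acquires negative degree on the flopped curves). What you need, and what is true, is crepancy: on a common resolution $p:Z\to X_{\rm p}$, $q:Z\to X_{j-1}$ of $\psi_{j-1}$ one has $p^*K_{X_{\rm p}}=q^*K_{X_{j-1}}$, so the projection formula applied to the strict transform of $\gamma$ in $Z$ gives $K_{X_{\rm p}}\cdot\gamma=K_{X_{j-1}}\cdot\gamma_{j-1}=0$, which is exactly $S\cdot\gamma=0$; your opening sentence shows you intend this, but the argument should be phrased through crepancy alone, not ``preservation of intersection numbers.'' Second, when you assert that every component of $\Gamma$ is the strict transform of a flopping curve of some $\phi_j$, you should either check that the generic point of such a component survives to $X_{j-1}$ for the first $j$ at which it falls into a flopping locus, or, more simply, replace $\Gamma$ from the start by the a priori larger union of strict transforms in $X_{\rm p}$ of all flopping curves of all the $\phi_j$; this union visibly contains the complement of the isomorphism open set, and your rationality and $K$-triviality arguments apply to it verbatim.
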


\begin{proof}[Proof of Lemma \ref{lem-flops}]
We argue by induction of the minimal number $n$ of flops needed to factorize $\alpha$. For $n=0$, we have an isomorphism, and the claim clearly holds. Assume that the claim is known for any finite sequence of $n$ flops, and let $\alpha$ be a sequence of $n+1$ flops. We decompose $\alpha = \alpha'\circ \varphi$, where $\alpha':X_{\rm p}\dashrightarrow Y'$ is a sequence of $n$ flops and $\varphi:Y'\dashrightarrow Y$ is a single flop. Let $S$ be a smooth, very general member of the linear system $\left\vert -\frac{1}{2}K_{X_{\rm p}}\right\vert$ such that
\begin{itemize}
\item $S$ is smooth;
\item if $r\le 7$, $S$ is a del Pezzo surface, and if $r=8$, $S$ contains no $(-2)$-curve (this condition is very general by \cite[Lemma 5, Proof of Lemma 6]{LO16});
\item the restrictionn $\alpha'|_S:S\dashrightarrow \alpha'_*(S)$ is an isomorphism.
\end{itemize}
In particular, the surface $S':= \alpha'_*(S)$ is smooth, iromorphic to $S$ and contained in the smooth locus of $Y'$.

By definition, the flop $\varphi$ is an isomorphism outside of a finite union of $K$-trivial smooth rational curves. Let $C$ be such a curve in $Y'$: Then $S'\cdot C = 0$ holds. If $C$ is contained in $S'$, then the adjunction yields that $-K_{S'}\cdot C = 0$. If $r\le 7$, this contradicts the fact that $S'$ is a del Pezzo surface, and if $r=8$, it contradicts the fact that $S$ contains no $(-2)$-curve by \cite[Lemma 4]{LO16}. So $C$ is disjoint from $S'$, and thus $S'$ is contained in the isomorphism open set of the flop $\varphi$.
\end{proof}

We can now prove Proposition \ref{prop-psautqp}.

\begin{proof}[Proof of Proposition \ref{prop-psautqp}]
One notices that for any divisor $D\in N^1(X_{\rm p})$,
$$q_{\rm p}(D) = D^2\cdot\left(-\frac{1}{2}K_{X_{\rm p}}\right).$$
Let $g\in\Psaut(X_{\rm p})$. By \cite[Corollary 3.4]{SX23}, one can decompose $g$ as a finite sequence of flops.
By Lemma \ref{lem-flops}, we conclude that
$$q_{\rm p}(D) = (D|_S)^2 = \left[(g^*D)|_{g^{-1}_*S}\right]^2 = (g^*D)^2\cdot \left(-\frac{1}{2}K_{X_{\rm p}}\right) = q_{\rm p}(g^*D),$$
for a very general $S\in\left\vert-\frac{1}{2}K_{X_{\rm p}}\right\vert $ and for $D\in N^1(X_{\rm p})$.
\end{proof}

We conclude this subsection with a consequence of Lemma \ref{lem-flops}.

\begin{corollary}\label{cor-definedbasecurve}
Let ${\rm p}$ be a very general $8$-tuple of points in $\P^3$. For any $g\in\Psaut(X_{\rm p})$, the isomorphism open sets of $g$ both contain the curve $C_{\rm p}$
\end{corollary}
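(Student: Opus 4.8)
The plan is to exploit that the curve $C_{\rm p}$ is cut out by the anticanonical pencil, and that by Lemma \ref{lem-flops} a very general anticanonical member avoids the flopping locus of $g$.

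First I would record the geometry of the anticanonical system. Since $-\tfrac12 K_{X_{\rm p}} = 2\eps_{\rm p}^*H - \sum_{i=1}^{8} E_i$, the linear system $\left\vert -\tfrac12 K_{X_{\rm p}}\right\vert$ is the strict transform of the pencil of quadrics through the eight points ${\rm p}$, and its base locus is precisely the strict transform of the elliptic quartic $C_{\rm p}$ (which, as $C_{\rm p}$ passes through the eight points, I continue to denote by $C_{\rm p}$, being isomorphic to it). In particular, any two members $S_1,S_2$ of the pencil satisfy $C_{\rm p}\subseteq S_1\cap S_2$, since both contain the base curve.

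Next, given $g\in\Psaut(X_{\rm p})$, I would use \cite[Corollary 3.4]{SX23} to decompose $g$ as a finite sequence of flops and then invoke Lemma \ref{lem-flops}. The essential point -- slightly stronger than the statement of the lemma, but already established in its proof -- is that at each step of the factorization the (transform of the) very general member $S$ is \emph{disjoint} from the $K$-trivial rational curves being flopped. As the non-isomorphism locus of a finite sequence of flops is a union of such curves, hence of pure dimension one, this shows that a very general $S\in\left\vert -\tfrac12 K_{X_{\rm p}}\right\vert$ is entirely contained in the isomorphism open set $U$ of $g$ on the source. I would then pick $S_1$ and $S_2$ both very general in the pencil, so that $S_1,S_2\subset U$ and therefore $C_{\rm p}\subseteq S_1\cap S_2\subset U$. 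Running the identical argument for $g^{-1}$ -- which is again a pseudoautomorphism, factors into flops, and preserves the intrinsic system $\left\vert -\tfrac12 K_{X_{\rm p}}\right\vert$ and hence its base curve $C_{\rm p}$ -- places $C_{\rm p}$ in the isomorphism open set $V$ of $g$ on the target as well, which gives the claim.

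The main obstacle is the gap between what Lemma \ref{lem-flops} literally asserts, namely that $g|_S$ is an \emph{isomorphism of surfaces}, and what is actually needed here, namely that $S$ lies in the isomorphism open set of $g$ viewed as a map of threefolds: a priori a birational map could restrict to an isomorphism along a divisor without being a local isomorphism of the ambient spaces near it. I expect this to be settled precisely as indicated above, by extracting from the proof of the lemma the disjointness of $S$ from every flopping curve throughout the factorization, rather than merely the isomorphism of the restriction.
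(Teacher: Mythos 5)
Your proposal is correct and follows essentially the same route as the paper: decompose $g$ into flops via \cite[Corollary 3.4]{SX23}, apply Lemma \ref{lem-flops} in the strengthened form established in its proof (disjointness of the transformed surface from every flopping curve, hence containment in the isomorphism open sets), and conclude because the base locus $C_{\rm p}$ of the half-anticanonical pencil lies in every member. Your explicit resolution of the gap between the lemma's literal statement and what is needed is precisely the right reading --- the paper invokes this stronger conclusion implicitly --- and your two remaining deviations (using two members $S_1,S_2$ rather than one, and treating the target side via $g^{-1}$ instead of pushing $S$ forward) are harmless variants of the same argument.
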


\begin{proof}
Since $r=8$, the half-anticanonical linear system $\left\vert -\frac{1}{2}K_{X_{\rm p}}\right\vert$ is a pencil. Its base locus is $C_{\rm p}$. By \cite[Corollary 3.4]{SX23}, the pseudoautomorphism $g$ is a finite sequence of flops, so Lemma \ref{lem-flops} applies and we can find a very general member $S$ of $\left\vert -\frac{1}{2}K_{X_{\rm p}}\right\vert$ that is contained in the isomorphism open sets of $g$. So is $C_{\rm p}\subset S$.
\end{proof}

\section{Restricting the action to the quartic curve \texorpdfstring{$C_{\rm p}$}{Cp}}\label{sec-4}

In this section, we work with $r\ge 8$ and ${\rm p}$ an $r$-tuple of points of $\P^3$ that is very general among $r$-tuples supported on a pencil of quadrics. In the notation of Remark \ref{rem-Vr}, ${\rm p}$ represents a very general point in $V_r\subset U_r$. For $r=8$, we have $V_8=U_8$, thus the condition is fulfilled by a very general $r$-tuple. For $r\ge 9$, it means that the support of ${\rm p}$ is contained in a very general quartic curve $C_{\rm p}\subset \P^3$. This curve is uniquely determined by ${\rm p}$. As a smooth genus one curve, $C_{\rm p}$ is moreover very general in moduli, and does not have complex multiplication.

The following lemma generalizes \cite[2.4. Lemme]{Hir88} from blow-ups of $\P^2$ in nine or more points to blow-ups of $\P^3$ in eight or more points. It should also relate to \cite[Lemma 5]{LO16}.

\begin{lemma}\label{lem-trinj}
Let $r\ge 8$ and ${\rm p}$ be an $r$-tuple of points that is very general among those contained in a common quartic curve $C_{\rm p}$ in $\P^3$. The restriction map
$${\rm tr}:{\rm Pic}(X_{\rm p})\to {\rm Pic}(C_{\rm p})$$
is injective.
\end{lemma}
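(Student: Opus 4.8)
The plan is to compute the restriction map on the natural basis of ${\rm Pic}(X_{\rm p})$, translate injectivity into a statement about the group law on the genus one curve $C_{\rm p}$, and conclude by a very-general-position argument over the irreducible parameter space of admissible configurations.

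First I would identify the strict transform $\tilde C$ of $C_{\rm p}$ in $X_{\rm p}$, which $\eps_{\rm p}$ maps isomorphically onto $C_{\rm p}$, and compute the restrictions of the generators $\eps_{\rm p}^* H, E_1,\ldots,E_r$ of ${\rm Pic}(X_{\rm p})\cong\Z^{r+1}$. Since $C_{\rm p}$ is a degree-$4$ genus one curve embedded by the complete linear system $|\mathcal{O}_{C_{\rm p}}(1)|$, the class $\eps_{\rm p}^* H$ restricts to $\mathcal{O}_{C_{\rm p}}(1)$, of degree $4$; because $C_{\rm p}$ is smooth at each $p_i$, its strict transform meets $E_i$ transversally at the single point lying over $p_i$, so $E_i$ restricts to the point class $[p_i]$, of degree $1$. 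Writing $D=d\,\eps_{\rm p}^* H+\sum_i c_i E_i$ and using ${\rm Pic}(C_{\rm p})\cong\Z\oplus{\rm Pic}^0(C_{\rm p})$, the element $D$ lies in $\ker {\rm tr}$ precisely when $4d+\sum_i c_i=0$ and the degree-zero class $\Phi(D):=d\,\mathcal{O}_{C_{\rm p}}(1)+\sum_i c_i[p_i]$ is trivial in ${\rm Pic}^0(C_{\rm p})$. In particular any $v=(d,c_1,\ldots,c_r)\ne 0$ failing the degree relation is automatically not in the kernel, so I only need to rule out degree-zero vectors.

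I would then run a countability argument over the irreducible parameter space $\mathcal{P}$ parametrizing $r$-tuples of distinct, linearly independent points supported on a smooth quartic curve, which fibers over the (irreducible) family of such curves with fiber an open subset of $C^r$; by hypothesis ${\rm p}$ is a very general point of $\mathcal{P}$. For each nonzero $v=(d,c_1,\ldots,c_r)$ with $4d+\sum_i c_i=0$, let $Z_v\subset\mathcal{P}$ be the locus where $\Phi$, read as a morphism from $\mathcal{P}$ to the relative degree-zero Picard scheme of the universal curve, vanishes. The vanishing of a section of a separated relative Picard scheme is a closed condition, so each $Z_v$ is closed. The crucial point is that $Z_v$ is also \emph{proper}: fixing the embedded curve and choosing an index $j$ with $c_j\ne 0$ (such a $j$ exists, since $4d+\sum_i c_i=0$ and $v\ne 0$ force the $c_i$ not all to vanish), the assignment $p_j\mapsto\Phi(D)$ equals $c_j\cdot p_j$ up to a constant, and multiplication by the nonzero integer $c_j$ is a nonconstant isogeny of the elliptic curve; hence $\Phi(D)$ is not identically zero on $\mathcal{P}$. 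Since $k$ is uncountable and $\mathcal{P}$ is irreducible, a very general point of $\mathcal{P}$ avoids the countable union $\bigcup_v Z_v$, whence $\ker {\rm tr}=0$.

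The only genuinely delicate step is the properness of the loci $Z_v$, i.e. exhibiting for each admissible $v$ a single configuration on which $\Phi$ does not vanish; this is exactly where the group structure on $C_{\rm p}$ and the freedom to move the points along a fixed curve enter, through the nonconstancy of multiplication-by-$c_j$ (no complex multiplication hypothesis is needed here). Everything else—the restriction computation and the reduction from the full Picard group to its degree-zero part—is routine, and the uncountability of $k$ together with the irreducibility of $\mathcal{P}$ supplies the final very-general conclusion.
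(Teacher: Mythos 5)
Your proof is correct and follows essentially the same route as the paper: restrict the standard basis of ${\rm Pic}(X_{\rm p})$ to the quartic curve, reduce injectivity to the nonexistence of a nontrivial relation $d\,H|_{C_{\rm p}}+\sum_i c_i[{\rm p}_i]\sim 0$ in ${\rm Pic}(C_{\rm p})$, and rule out such relations by very generality of the points. The differences are only in bookkeeping: the paper expresses $(2H)|_{C_{\rm p}}$ via the eighth base point of a net of quadrics and leaves the very-generality step terse, whereas you make that step explicit through the countable union of closed loci $Z_v$ and the nonconstancy of the multiplication-by-$c_j$ isogeny, which is a more detailed justification of exactly what the paper asserts.
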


\begin{proof}
We work with divisors modulo linear equivalence. Let us denote by $H$ the class of a hyperplane in $\P^3$.
Let $D$ be a divisor on $X_{\rm p}$ with ${\rm tr}(D)\sim 0$ and write
$$D\equiv 2d_0 H - \sum_{i=1}^r d_iE_i.$$
Without loss of generality, we can assume that $d_0 \ge 0$. We write $D=A-B$ with $A$ and $B$ both ample divisors.
Note that ${\eps_{\rm p}}_*A$ and ${\eps_{\rm p}}_*B$ define sections of the line bundles $\mathcal{O}_{\P^3}(2d_0+b_0)$, respectively $\mathcal{O}_{\P^3}(b_0)$, passing throuph the points ${\rm p}_i$ with multiplicities $d_i+b_i$ respectively $b_i$, for some integers $b_0,\ldots,b_8>0$. In particular, we have
$$0 = {\rm tr}(D) = {\eps_{\rm p}}_*(A-B)|_{C_{\rm p}} = \sum_{i=1}^r d_i {\rm p}_i + (2d_0H)|_{C_{\rm p}}.$$
Note that $(2H)|_{C_{\rm p}}$ is linearly equivalent to the sum of any seven of the ${\rm p}_i$ and of the eighth base point of the net of quadrics passing through them. Since ${\rm p}$ is very general, this enforces $d_1=\ldots=d_r=0$, thus $d_0=0$ and $D=0$, as wished.
\end{proof}

The next lemma derives from a very classical argument; see \cite{Cob16}, \cite[Proposition 8]{Giz81}, \cite[2.3. Proposition]{Hir88}.

\begin{lemma}\label{lem-gactingrlarge}
Let $r\ge 8$ and ${\rm p}$ be an $r$-tuple of points that is very general among those contained in a common quartic curve $C_{\rm p}$ in $\P^3$. Let $g\in\Psaut(X_{\rm p})$ such that the curve $C_{\rm p}$ in $X_{\rm p}$ is contained in the isomorphism open sets of $g$ and that the pullback $g^*$ preserves the quadratic form $q_{\rm p}$. Then there exist $\sigma\in\{\pm 1\}$ and $L\in {\rm Pic}(X_{\rm p})$ such that 
\begin{align*}
g^*E_i 
&= \sigma E_i + L\mbox{ for every }1\le i\le 8\mbox{ and}\\
g^*\eps_{\rm p}^*H 
&= \sigma \eps_{\rm p}^*H + 4L.
\end{align*}
\end{lemma}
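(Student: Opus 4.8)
The plan is to exploit the injectivity of the restriction map $\mathrm{tr}$ from Lemma \ref{lem-trinj} together with the fact that $g$ restricts to an automorphism of the genus one curve $C_{\mathrm p}$. Since $C_{\mathrm p}$ lies in the isomorphism open sets of $g$, the pseudoautomorphism $g$ induces a genuine automorphism $\bar g$ of $C_{\mathrm p}$, and the restriction map intertwines $g^*$ on $\mathrm{Pic}(X_{\mathrm p})$ with $\bar g^*$ on $\mathrm{Pic}(C_{\mathrm p})$. First I would record how $\bar g^*$ acts on $\mathrm{Pic}(C_{\mathrm p})$: an automorphism of a very general elliptic curve (without complex multiplication) acts on divisor classes of each fixed degree by a translation, and on the full Picard group it acts by $\pm 1$ up to such translations, the sign $\sigma$ being the degree of $\bar g$ as an isogeny-type map. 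The upshot I want is that for any two divisor classes $D, D'$ of the same degree on $C_{\mathrm p}$, the difference $\bar g^*(D-D') = \sigma(D-D')$, since a degree-zero class is rigid under the translation part.

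Next I would transport this back through $\mathrm{tr}$. The key differences to examine are $E_i - E_j$ and $\eps_{\mathrm p}^*H - 4E_i$; I would check that these restrict to degree-zero classes on $C_{\mathrm p}$, using that each $E_i$ meets $C_{\mathrm p}$ in one point (so $\mathrm{tr}(E_i) = {\mathrm p}_i$ has degree one) and that $\mathrm{tr}(\eps_{\mathrm p}^*H)$ has degree four, the degree of $C_{\mathrm p}$. From the rigidity of degree-zero classes I would deduce $\mathrm{tr}(g^*E_i - g^*E_j) = \sigma\,\mathrm{tr}(E_i - E_j)$ and likewise $\mathrm{tr}(g^*\eps_{\mathrm p}^*H - 4g^*E_i) = \sigma\,\mathrm{tr}(\eps_{\mathrm p}^*H - 4E_i)$. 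Injectivity of $\mathrm{tr}$ then upgrades these identities from $\mathrm{Pic}(C_{\mathrm p})$ to $\mathrm{Pic}(X_{\mathrm p})$, giving $g^*E_i - g^*E_j = \sigma(E_i - E_j)$ and $g^*\eps_{\mathrm p}^*H - 4g^*E_i = \sigma(\eps_{\mathrm p}^*H - 4E_i)$.

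From here I would set $L := g^*E_1 - \sigma E_1$. The first family of identities forces $g^*E_i - \sigma E_i = g^*E_1 - \sigma E_1 = L$ for all $i$, which is exactly the desired $g^*E_i = \sigma E_i + L$. Substituting into the second identity and solving gives $g^*\eps_{\mathrm p}^*H = \sigma\eps_{\mathrm p}^*H + 4L$, completing the statement. It remains only to pin down that the scalar $\sigma$ really is $\pm 1$ and is independent of $i$; this is where I would invoke the hypothesis that $g^*$ preserves the quadratic form $q_{\mathrm p}$, which constrains $\bar g^*$ to act by a sign on degree-zero classes and rules out other multipliers.

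The main obstacle I expect is the careful bookkeeping of degrees and the precise description of how $\bar g^*$ acts on $\mathrm{Pic}(C_{\mathrm p})$ for a very general elliptic curve: I must ensure that the translation ambiguity genuinely disappears on degree-zero classes and that the global sign $\sigma$ is forced to be the same for all the exceptional divisors and compatible with the action on $H$. The no-complex-multiplication hypothesis, emphasized in Section \ref{sec-4}, is what guarantees that no exotic automorphisms of $C_{\mathrm p}$ intervene and that the multiplier is confined to $\{\pm 1\}$, so I would lean on it explicitly at this step rather than treat it as cosmetic.
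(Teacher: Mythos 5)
Your proposal is correct and takes essentially the same route as the paper: both arguments restrict $g$ to an automorphism of the genus-one curve $C_{\rm p}$, use $\Aut(C_{\rm p}) = C_{\rm p}\rtimes\Z/2\Z$ (valid since $C_{\rm p}$ is very general, hence without complex multiplication) to produce the sign $\sigma$, and then invoke the injectivity of ${\rm tr}$ from Lemma \ref{lem-trinj} to promote the resulting identities in ${\rm Pic}(C_{\rm p})$ to ${\rm Pic}(X_{\rm p})$ --- the paper merely phrases your ${\rm Pic}^0$-computation in terms of explicit image points of the ${\rm p}_i$ rather than degree-zero differences. One nuance: the confinement of the multiplier to $\{\pm 1\}$ is forced by the absence of complex multiplication alone (as your final paragraph correctly states), not by the preservation of $q_{\rm p}$, which in fact plays no visible role in the paper's own proof of this lemma either.
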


\begin{proof}[Proof of Lemma \ref{lem-gactingrlarge}]
Since $C_{\rm p}$ is entirely contained in the isomorphism open sets of $g$, pulling back by $g$ transforms the exceptional divisors $E_i$ into prime divisors $F_i$ which still satisfy $F_i\cdot C_{\rm p} = 1$ for all $1\le i\le r$. The unique intersection point of $F_i$ with $C_{\rm p}$ is then the image of ${\rm p}_i$ by the automorphism $g^{-1}|_{C_{\rm p}}$ of the smooth curve $C_{\rm p}$ of genus one. Since $C_{\rm p}$ is a very general elliptic curve, we can write 
$$g^{-1}|_{C_{\rm p}}\in\Aut(C_{\rm p}) = C_{\rm p}\rtimes \Z/2\Z,$$
which acts by translations and inversion with respect to a fixed origin point, say ${\rm p}_1$. Let $\sigma$ be $1$ if $g^{-1}|_{C_{\rm p}}$ is a translation and $-1$ otherwise. Let $t\in C_{\rm p}$ be the image of ${\rm p}_1$ by $g^{-1}|_{C_{\rm p}}$.
This shows two things:
\begin{itemize}
\item that $t - \sigma {\rm p}_1  = (F_i -\sigma E_i)|_{C_{\rm p}}$ in ${\rm Pic}(C_{\rm p})$, which does not depend on $i$;
\item that $4(t- \sigma {\rm p}_1) = (g^*\eps_{\rm p}^*H - \sigma \eps_{\rm p}^*H)|_{C_{\rm p}}$.
\end{itemize}
By Lemma \ref{lem-trinj}, we deduce that the divisor $F_i -\sigma E_i$ in ${\rm Pic}(X_{\rm p})$ does not depend on $i$ either. That is the divisor $L$ we are seeking after. Applying Lemma \ref{lem-trinj} to the divisors $4L$ and $g^*\eps_{\rm p}^*H - \sigma \eps_{\rm p}^*H$ concludes.
\end{proof}

\section{On certain isometries preserving the effective cone}\label{sec-5}

We keep following the argument of \cite{Hir88} and \cite{DO88} for this last statement, as the more hands-on approach of \cite{Koi88} seems harder to generalize. 

\begin{lemma}\label{lem-Lzerosone}
Let $r\ge 8$ and ${\rm p}$ be an $r$-tuple of points that is very general among those contained in a common quartic curve in $\P^3$. Let $\iota$ be an isometry of the lattice $N^1(X_{\rm p})$ with $q_{\rm p}$, that preserves the effective cone and the anticanonical class. Assume that there exist $\sigma\in\{\pm 1\}$ and $L\in {\rm Pic}(X_{\rm p})$ such that \begin{align*}
\iota(E_i)
&= \sigma E_i + L\mbox{ for every }1\le i\le r\mbox{ and}\\
\iota(\eps_{\rm p}^*H)
&= \sigma \eps_{\rm p}^*H + 4L.
\end{align*}
Then $L=0$ and $\sigma = 1$, i.e., $\iota$ is the identity.
\end{lemma}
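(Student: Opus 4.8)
The plan is to exploit the constraint that $\iota$ is an isometry for $q_{\rm p}$ that preserves the anticanonical class, and to extract numerical equations on $L$ and $\sigma$ from these.

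First I would record the relevant intersection-theoretic data. Recall $q_{\rm p}(D)=D^2\cdot(-\tfrac12 K_{X_{\rm p}})$, with $-\tfrac12 K_{X_{\rm p}} = 2\eps_{\rm p}^*H - \sum_{i=1}^r E_i$ (an anticanonical is $-K=4H-2\sum E_i$ after pushforward, so its half is the stated class). The marking $\varphi_{\rm p}$ identifies $q_{\rm p}$ with the hyperbolic form on ${\rm H}_r$: writing $h_0=\eps_{\rm p}^*H$, $h_i=E_i$, we have $q_{\rm p}(h_0)=2$, $q_{\rm p}(h_i)=-1$, and the classes are orthogonal. So the first step is to compute $q_{\rm p}(L)$, the pairings $(L,\eps_{\rm p}^*H)$ and $(L,E_i)$, and to express the isometry conditions as equations in these quantities.

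The main computation is to impose that $\iota$ preserves pairings. Since $\iota(E_i)=\sigma E_i + L$ and $\iota(\eps_{\rm p}^*H)=\sigma\eps_{\rm p}^*H+4L$, I would expand, using bilinearity, the requirement that $(\iota(E_i),\iota(E_j))=(E_i,E_j)$ for $i\ne j$ and for $i=j$, and that $(\iota(\eps_{\rm p}^*H),\iota(E_i))=(\eps_{\rm p}^*H,E_i)=0$, and $q_{\rm p}(\iota(\eps_{\rm p}^*H))=q_{\rm p}(\eps_{\rm p}^*H)$. For instance, from $q_{\rm p}(\iota(E_i))=q_{\rm p}(E_i)$ one gets $q_{\rm p}(L)+2\sigma(L,E_i)=0$, and from the off-diagonal condition $(\iota(E_i),\iota(E_j))=0$ one gets $q_{\rm p}(L)+\sigma(L,E_i)+\sigma(L,E_j)=0$; subtracting these shows $(L,E_i)=(L,E_j)=:c$ is independent of $i$, with $q_{\rm p}(L)=-2\sigma c$. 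Similarly the $\eps_{\rm p}^*H$–$E_i$ condition gives $\sigma(L,\eps_{\rm p}^*H) + 4\sigma(L,E_i) + 4q_{\rm p}(L) = 0$, i.e. $(L,\eps_{\rm p}^*H)=-4c-4q_{\rm p}(L)/\sigma = -4c + 8c = 4c$. The idea is that these linear relations pin down $L$ to be a multiple of the anticanonical direction: indeed $L$ has the same pairings as $c\cdot(4\eps_{\rm p}^*H - \sum_i E_i)/(\text{something})$, and since $q_{\rm p}$ is nondegenerate this forces $L$ to be proportional to $-\tfrac12 K_{X_{\rm p}}$.

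Next I would invoke the anticanonical-preservation hypothesis: $\iota(K_{X_{\rm p}})=K_{X_{\rm p}}$. Applying $\iota$ to $-\tfrac12 K = 2\eps_{\rm p}^*H-\sum_i E_i$ gives $\sigma(-\tfrac12 K) + (8-r)L = -\tfrac12 K$ (the coefficient $8-r$ comes from $2\cdot 4L - r\cdot L$). For $r=8$ this reads $\sigma(-\tfrac12 K)=-\tfrac12 K$, forcing $\sigma=1$; for $r>8$ it gives $(\sigma-1)(-\tfrac12 K)=(r-8)L$, so $L$ is a nonzero multiple of $-\tfrac12 K$ precisely when $\sigma=-1$. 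The final step, and the one I expect to be the genuine obstacle, is to rule out $\sigma=-1$, $L\ne 0$ using the effective-cone hypothesis. When $\sigma=-1$, $\iota(E_i)=-E_i+L$ with $L$ a positive multiple of $-\tfrac12 K$; I would argue that iterating $\iota$ (or $\iota$ applied to an effective class such as $E_i$) produces classes that leave the effective cone, since the $-E_i$ part pushes against effectivity while $L$ drifts in the anticanonical direction without compensating. Concretely, one checks that $\iota(E_i)=-E_i+L$ cannot be effective (its intersection against a suitable nef or movable class, e.g. against $-\tfrac12 K$ itself or against a curve class, is negative), contradicting that $\iota$ preserves the effective cone and that $E_i$ is effective. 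This effectivity obstruction is where the geometry of the very general configuration, rather than pure lattice theory, enters, and is the crux of the argument; once $\sigma=-1$ is excluded we have $\sigma=1$, and then the relation $(r-8)L=0$ forces $L=0$, so $\iota=\id$.
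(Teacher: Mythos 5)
Your plan follows the paper's proof closely: the same three ingredients in the same order, namely lattice relations forcing $L$ onto the canonical ray, preservation of the anticanonical class to relate $\sigma$, $L$ and $r$, and preservation of the effective cone tested against a curve class to exclude $\sigma=-1$. Your step-2 computations are correct (indeed your signs are the internally consistent ones: with the paper's definition $\alpha = q(L)/2\sigma$, its displayed identity $(\ast)$ should read $q(\eps_{\rm p}^*H,L)=-4\alpha$, which is your $4c$ with $c=-\alpha$). However, two of your concluding steps fail as written.

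First, the case $r=8$, which is the only case Theorem \ref{thm-main} actually needs. Your last sentence derives $L=0$ from ``$(r-8)L=0$'', which is vacuous when $r=8$. The fix is available from relations you already derived: you have $L=c\,\bigl(-\tfrac12 K_{X_{\rm p}}\bigr)$ and $q_{\rm p}(L)=-2\sigma c$, while $q_{\rm p}\bigl(-\tfrac12 K_{X_{\rm p}}\bigr)=8-r=0$ for $r=8$; hence $-2\sigma c = q_{\rm p}(L)=c^2\cdot 0=0$, forcing $c=0$ and so $L=0$. This is exactly the paper's $r=8$ argument (the half-anticanonical class is isotropic), and you need to say it; note that your route to $\sigma=1$ at $r=8$, via $\sigma\bigl(-\tfrac12 K\bigr)=-\tfrac12 K$, is correct and even slightly cleaner than the paper's, which deduces $\sigma=1$ from the saliency of the effective cone after showing $L=0$.

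Second, the exclusion of $\sigma=-1$ for $r\ge 9$, which you yourself call the crux, rests on a sign error that is not cosmetic. From your own relation $(\sigma-1)\bigl(-\tfrac12 K\bigr)=(r-8)L$ with $\sigma=-1$ you get $L=\tfrac{1}{r-8}K_{X_{\rm p}}$: a positive multiple of the \emph{canonical} class, not of $-\tfrac12 K$ as you assert. The sign decides whether your numerical obstruction exists. With the correct sign, $\iota(E_1)=-E_1+\tfrac{1}{r-8}K$ meets the movable curve class $(\eps_{\rm p}^*H)^2$ in $-4/(r-8)<0$, so it cannot be effective, contradicting that $\iota$ preserves effectivity and $E_1$ is effective; equivalently, in the paper's phrasing, $L=\iota(E_1)+E_1$ is effective, hence so is $K=(r-8)L$, contradicting $K\cdot(\eps_{\rm p}^*H)^2=-4<0$. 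With your sign, that same test gives the positive number $2\lambda$, producing no contradiction, and testing against ``$-\tfrac12 K$ itself'' proves nothing either, because $\bigl(-\tfrac12 K\bigr)^2$ is the class of the rigid curve $C_{\rm p}$, which an effective divisor may well meet negatively. So as stated your contradiction does not materialize; it does close, exactly as in the paper, once the sign of $L$ is corrected.
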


\begin{proof}
Since $\iota$ preserves the quadratic form $q:=q_{\rm p}$, we have
\begin{equation}\tag{$\ast$}
q(E_i,L) = -\alpha\mbox{ for every }1\le i\le r\mbox{ and }q(\eps_{\rm p}^*H,L) = 4\alpha,
\end{equation}
where $\alpha$ denotes the scalar $\frac{q(L)}{2\sigma}$.
Since $q$ is non-degenerate, this and an easy computation imply that $L$ is numerically equivalent to the divisor class: $-\frac{\alpha}{2}K_{X_{\rm p}}$. 

Let us first assume that $r=8$. Then we have $q\left(-\frac{1}{2}K_{X_{\rm p}}\right)=0$. Therefore, $q(L)=0$, that is $\alpha = 0$, and as a result $L=0$. Since $\iota$ preserves the effective cone, which is non-degenerate, the sum $\iota(E_i)+E_i$ cannot be zero, and thus $\sigma =1$.

Let us now assume that $r\ge 9$. Using that $\iota$ preserves both $q$ and the anticanonical class on the left handside and Identity ($\ast$) on the right handside, we note that
$$1 -\sigma 
= q\left(\iota(E_1) - \sigma E_1, -\frac{1}{2}K_{X_{\rm p}}\right) 
= q\left(L, -\frac{1}{2}K_{X_{\rm p}}\right)
= \alpha(8-r).$$
In particular, if $\sigma = 1$, then $\alpha=0$ and thus $L=0$, as wished.

Let us finally assume by contradiction that $\sigma = -1$.
Then $\alpha$ is negative. 
Then also $L=\iota(E_i)+E_i$ is an effective class, and so are its positive multiples, such as the canonical class $K_{X_{\rm p}}$. However, the curve class $(\eps_{\rm p}^*H)^2$ is strongly movable, and
$$K_{X_{\rm p}}\cdot (\eps_{\rm p}^*H)^2 =-2\, q(\eps_{\rm p}^*H) = -4 < 0,$$
a contradiction.
\end{proof}

\section{Proof of the main theorems}\label{sec-6}

We start with a simple, yet important fact.

\begin{lemma}\label{lem-psaut*}
Let $X$ be a blow-up of $\P^3$ at $r$ points, five of which form a general $5$-tuple in $\P^3$. Then the representation $\rho:\Psaut(X)\to\GL(N^1(X)/{\rm tors})$ is faithful.
\end{lemma}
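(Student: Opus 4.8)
The plan is to show that $\rho$ is injective, i.e. that any $g\in\Psaut(X)$ with $\rho(g)=\id$ is the identity birational map. Write $\eps:X\to\P^3$ for the blow-down, $p_1,\dots,p_r$ for the centers, $E_1,\dots,E_r$ for the exceptional divisors, and assume $p_1,\dots,p_5$ form a general $5$-tuple. Since $X$ is a smooth rational threefold, its Picard group is free of finite rank and numerical equivalence agrees with linear equivalence on divisors; thus $\rho(g)=\id$ means $g^*D\sim D$ for every divisor class $D$. The strategy is to exploit the two relations $g^*\eps^*H\sim\eps^*H$ and $g^*E_i\sim E_i$ to show first that $g$ descends to a \emph{linear} automorphism of $\P^3$, and then that this automorphism fixes each $p_i$, forcing it to be trivial.

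First I would establish linearity. The complete linear system $|\eps^*H|$ is base-point free, has projective dimension $3$, and the morphism it defines is precisely $\eps$. Because $g$ is an isomorphism in codimension one and $X$ is smooth, $g^*$ induces an isomorphism on global sections of line bundles in matching classes, so $g^*|\eps^*H|$ is the full complete system $|g^*\eps^*H|=|\eps^*H|$. Hence the rational map $\eps\circ g:X\dashrightarrow\P^3$ is defined by the same complete linear system as $\eps$, and the two differ by a projective change of coordinates: there is $\phi\in\PGL_4$ with $\eps\circ g=\phi\circ\eps$. In other words the birational self-map $\phi:=\eps\circ g\circ\eps^{-1}$ of $\P^3$ is a genuine linear automorphism.

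Next I would pin down $\phi$ on the points. The class $E_i$ is rigid: the exact sequence $0\to\O_X\to\O_X(E_i)\to\O_{E_i}(E_i)\to 0$ together with $\O_{E_i}(E_i)\cong\O_{\P^2}(-1)$ gives $h^0(\O_X(E_i))=1$, so $E_i$ is the unique effective divisor in its class. Since $g$ is a pseudoautomorphism, the proper transform $g^{-1}_*E_i$ is an effective divisor of class $g^*E_i\sim E_i$, hence equals $E_i$; that is, $g$ preserves the prime divisor $E_i$. Feeding this into $\eps\circ g=\phi\circ\eps$ and evaluating at the generic point of $E_i$---whose image under $g$ again lies in $E_i$ and therefore maps to $p_i=\eps(E_i)$---yields $\phi(p_i)=p_i$ for all $i$. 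In particular $\phi$ fixes the general $5$-tuple $p_1,\dots,p_5$, which is a projective frame of $\P^3$; by the fundamental theorem of projective geometry the stabilizer of a frame in $\PGL_4$ is trivial, so $\phi=\id$ and hence $g=\eps^{-1}\circ\phi\circ\eps=\id$.

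The main point requiring care is the descent of $g$ through $\eps$: one must ensure that $g$ becomes an honest automorphism of $\P^3$ rather than a nontrivial Cremona transformation, and correctly track indeterminacy when transporting the generic point of $E_i$ along $\eps\circ g=\phi\circ\eps$. Both are controlled, respectively, by the rigidity of the complete system $|\eps^*H|$ and by the rigidity $h^0(\O_X(E_i))=1$ of the exceptional classes.
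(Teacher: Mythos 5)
Your proof is correct, and it reaches the same endgame as the paper (descend to $\PGL_4$, use rigidity of the $E_i$ to see that the blown-up points are fixed, and kill the resulting projective transformation using the general $5$-tuple), but the mechanism you use to descend is genuinely different. The paper first upgrades $g$ from a pseudoautomorphism to a biregular automorphism by an application of the negativity lemma (citing \cite[Lemma 4.2]{GLSW24}): once $g\in\Aut(X)$ and $g$ preserves each rigid divisor $E_i$, it descends under $\eps$ to some $\gamma\in\PGL_4$ fixing the centers, hence $\gamma=\id$ and $g=\id$. You never regularize $g$ at all: instead you exploit that a pseudoautomorphism acts on global sections of line bundles (by normality and the codimension-two indeterminacy), so that $\eps\circ g$ and $\eps$ are both given by the complete, base-point-free system $\vert\eps^*H\vert$ and therefore satisfy $\eps\circ g=\phi\circ\eps$ for some $\phi\in\PGL_4$ as rational maps; the rigidity $h^0(\mathcal{O}_X(E_i))=1$ then pins $\phi(p_i)=p_i$, and triviality of $\phi$ forces $g=\eps^{-1}\circ\eps=\id$. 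What each approach buys: the paper's route is shorter given the cited lemma and yields the stronger intermediate statement $\ker\rho\subset\Aut(X)$, which is of independent interest; your route is more self-contained and elementary, replacing the negativity lemma by classical facts about linear systems, at the cost of having to track values of rational maps at the generic points of the $E_i$ (which you do correctly, since the indeterminacy locus of $g$ has codimension at least two and so misses those generic points). One cosmetic remark: the base field is an arbitrary uncountable algebraically closed field $k$, so you should write $\PGL_4(k)$ rather than leave the field implicit (the paper itself slips here, writing $\mathrm{PGL}(4,\C)$).
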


\begin{proof}
An application of the negativity lemma (see for instance \cite[Lemma 4.2]{GLSW24}) shows that $\ker \pi\subset\Aut(X)$. Any element $g\in\ker\pi$ notably fixes the numerical classes of the exceptional divisors $E_1,\ldots,E_r$ of the blow-up map $\eps:X\to\P^3$. Since $E_i$ is not numerically equivalent to any effective divisor other than itself, $g$ descends under $\eps$ to an automorphism $\gamma\in\mathrm{PGL}(4,\C)$ fixing the $r$ blown-up points and in particular a general $5$-tuple of points. Thus, $\ker\pi = \{\id_X\}$.
\end{proof}

We now prove our main theorems.

\begin{proof}[Proof of Theorem \ref{thm-main}]
Recall that $X$ denotes the blow-up of $\P^3$ at eight very general points. Let $g\in\Psaut(X)$. By Corollary \ref{cor-definedbasecurve} and Proposition \ref{prop-psautqp}, Lemma \ref{lem-gactingrlarge} applies to $g$. Thus and by Proposition \ref{prop-psautqp} again, the isometry $\iota:=g^*$ satisfies the assumptions of Lemma \ref{lem-Lzerosone}, therefore
$$g^*E_i=E_i
\mbox{ for all $1\le i\le r$ and }
g^*\eps^*H = \eps^*H,$$
where $\eps=X\to\P^3$ denotes the blow-up map and $E_i$ its exceptional divisors.
Hence $g$ belongs to the kernel of the linear representation
$$\rho:\Psaut(X)\to{\rm GL}(N^1(X)/{\rm tors}),$$
and Lemma \ref{lem-psaut*} concludes that $g$ is trivial.
\end{proof}

\begin{proof}[Proof of Theorem \ref{thm-r9}]
Note that \cite[Theorem 5, Page 99]{DO88} and Theorem \ref{thm-main} immediately imply Theorem \ref{thm-r9} for $r=8$.

We now prove the theorem for $r\ge 9$. Let $w\in W_{3,r}$ with $\co_{3,r}(w)$ trivial. Let ${\rm p}$ be an $r$-tuple of points that is very general among those supported on a quartic curve $C_{\rm p}$ in $\P^3$. By Remark \ref{rem-Vr} and \cite[Page 99, Lemma 2]{DO88} and obtain a pseudoautomorphism $g$ of $X_{\rm p}$ satisfying
$$\varphi_{\rm p}\circ \pi_r(w)\circ\varphi_{\rm p}^{-1}=g^*.$$
In particular, the pullback $g^*$ preserves the quadratic form $q_{\rm p}$. 
We claim that the curve $C_{\rm p}$ is contained in the isomorphism open sets of $g$. Once that claim is established, we can apply Lemmas \ref{lem-gactingrlarge} and \ref{lem-Lzerosone} to derive that the pullback $g^*$ is trivial, and conclude by faithfulness of the representation $\varphi_{\rm p}\circ \pi_r\circ\varphi_{\rm p}^{^-1}$ that $w\in W$ is trivial too.

Let us prove the claim. In fact by Remark \ref{rem-Vr}, it makes sense to prove more generally that for any element $v\in W_{3,r}$, denoting ${\rm q}:= \co_{3,r}(v)({\rm  p})$, the isomorphism in codimension one
$$g:X_{\rm q}\dashrightarrow X_{\rm  p}$$
induced as in \cite[Theorem 1]{Muk04}, \cite[Page 86, Proposition 1]{DO88} 
has its isomorphism open sets contain the curves $C_{\rm q}$ and $C_{\rm  p}$ respectively.
We proceed by induction on the minimal number $k$ of occurences of the generator $s$ necessary to write out $v\in W_{3,r}$. If none is needed, then $g$ is the identity and the result holds. Fix $k\ge 1$ and assume that the result holds for $k-1$. Let $v\in W_{3,r}$ be an element optimally written with exactly $k$ occurences of $s$. Using that $W_{3,r}$ is a Coxeter group, we rewrite $v=us$, where $u\in W_{3,r}$ can be written with strictly fewer occurences of $s$. Consider the isomorphisms in codimension one induced by $v$ and $u$, namely
$$g:X_{\rm q}\dashrightarrow X_{\rm  p}\mbox{ and }
h:X_{\rm q}\dashrightarrow X_{\co_{3,r}(s)({\rm p})}$$
respectively, and let $c:X_{\co_{3,r}(s)({\rm p})}\dashrightarrow X_{\rm  p}$ be the lift of the standard Cremona transformation of $\P^3$ centered at the first four points of ${\rm p}$.
Note that 
$g= c\circ h.$
The isomorphism open sets of $c$ are known to be the complements of the strict transforms of the six lines through ${\rm p}_1,\ldots, {\rm p}_4$: In particular, they contain the two curves $C_{\rm p}$ and $C_{\co_{3,r}(s)({\rm p})}$. By the induction hypothesis, the isomorphism open sets of $h$ contain $C_{\co_{3,r}(s)({\rm p})}$ and $C_{\rm q}$. This shows that $g$ indeed contains $C_{\rm q}$ and $C_{\rm  p}$ in its respective isomorphism open sets.
\end{proof}

\begin{proof}[Proof of Corollary \ref{cor-cc}]
Let $C$ be the curve that is the base locus of $\left\vert-\frac{1}{2}K_X\right\vert$. The divisor $-K_X$ is not semiample \cite{LO16}. Thus, for $m\ge 1$ and for $D\in |-mK_X|$, the curve $C$ is not disjoint from $D$: But $-K_X\cdot C = 0$, so $C$ is contained in $D$. This shows that $C$ is in the base-locus of the linear system $|-mK_X|$ too, hence no Calabi-Yau pair $(X,\Delta)$ is klt along $C$.

By \cite[Lemma 7.1]{SX23}, the movable effective cone ${\rm Mov}^{\rm e}(X)$ is not rational polyhedral. By Theorem \ref{thm-main}, the group $\Psaut(X)$ is trivial, and so is the subgroup $\Psaut(X,\Delta)$ for any pair $(X,\Delta)$. The pair $(X,\Delta)$ clearly fails the movable cone conjecture.
\end{proof}

\bigskip

\bibliographystyle{alpha}
\bibliography{bibPsautP3.bib}

\end{document}